\def\NZQ{\mathbf}               
\def\NN{{\NZQ N}}
\def\QQ{{\NZQ Q}}
\def\ZZ{{\NZQ Z}}
\def\RR{{\NZQ R}}
\def\P{\mathcal P}
\def\R{{\mathbf R}}
\newtheorem{theorem}{Theorem}[section]
\newtheorem{lemma}[theorem]{Lemma}
\newtheorem{corollary}[theorem]{Corollary}
\theoremstyle{definition}
\newtheorem{definition}[theorem]{Definition}
\newtheorem{example}[theorem]{Example}
\theoremstyle{remark}
\theoremstyle{Remark}
\newtheorem{Remark}[theorem]{Remarks}
\newtheorem{Theorem}{Theorem}[section]
\newtheorem{Example}[Theorem]{Example}
\let\epsilon\varepsilon
\let\phi=\varphi
\let\kappa=\varkappa
\begin{document}

\title{Semigroups of valuations on local rings, II}

\author{Steven Dale Cutkosky, Bernard Teissier}
\thanks{The first author was partially supported by NSF and by the University Paris 7-Denis Diderot.\\ AMS classification: Primary: 13A18, 14 E15, 16W50.
Secondary: 06F05}

\maketitle
\begin{abstract}
Given a noetherian local domain $R$ and a valuation $\nu$ of its field of fractions which is non negative on $R$, we derive some very general bounds on the growth of the number of distinct valuation ideals of $R$ corresponding to values lying in certain parts of the value group $\Gamma$ of $\nu$. We show that this growth condition imposes restrictions on the semigroups $\nu(R\setminus \{0\})$ for noetherian $R$ which are stronger that those resulting from the previous paper \cite{C2} of the first author. Given an ordered embedding $\Gamma\subset ({\mathbf R}^h)_{\hbox{\rm lex}}$, where $h$ is the rank of $\nu$, we also study the shape in ${\mathbf R}^h$ of the parts of $\Gamma$ which appear naturally in this study. We give examples which show that this shape can be quite wild in a way which does not depend on the embedding and suggest that it is a good indicator of the complexity of the semigroup $\nu(R\setminus \{0\})$.
\end{abstract}
\par\medskip
Let $(R,m_R)$ be a local domain, with fraction field $K$. Suppose that $\nu$ is a valuation of $K$
with valuation ring $(V,m_V)$ which dominates $R$; that is, $R\subset V$ and $m_V\cap
R=m_R$. The value groups $\Gamma$ of $\nu$ which can appear when $K$ is an algebraic function field have been extensively studied and classified,
including in the papers  MacLane \cite{M}, MacLane and Schilling \cite{MS}, Zariski and Samuel
\cite{ZS}, Kuhlmann \cite{K} and Moghaddam \cite{Mo}. These groups are well understood. The most basic fact is that there is an order preserving
embedding of $\Gamma$ into $\RR^h$ with the lex order, where $h$ is the \textit{rank} of the valuation, which is less than or equal to the dimension of $R$. The
semigroups
$$
S^R(\nu)=\{\nu(f)\mid f\in m_R-\{0\}\},
$$
which can appear when $R$ is a noetherian domain with fraction field $K$ dominated by $\nu$,  are not well understood, although they are known to encode important information about the  ideal theory of $R$ and the
geometry and resolution of singularities of $\hbox{\rm Spec }R$. In particular, after \cite{T}, the toric resolutions of singularities of the affine toric varieties associated to certain finitely generated subsemigroups of $S^R(\nu)$ are closely related to the local uniformizations of $\nu$ on $R$.

In Zariski and Samuel's classic book on Commutative Algebra \cite{ZS}, two general facts about
semigroups $S^R(\nu)$ of valuations on noetherian local domains are proven (in Appendix 3 to Volume II).
\begin{enumerate}
\item[1.] For any valuation $\nu$ of $K$ which is non negative on $R$, the semigroup $S^R(\nu)$ is a well ordered subset of the positive part of the value group $\Gamma$ of
$\nu$, of ordinal type at most $\omega^h$, where $\omega$ is the ordinal type of the well ordered
set $\NN$, and $h$ is the rank of the valuation. \item[2.] If $\nu$ dominates $R$, the rational rank of $\nu$ plus the
transcendence degree of $V/m_V$ over $R/m_R$ is less than or equal to the dimension of $R$.
\end{enumerate}
The second condition is the Abhyankar inequality \cite{Ab}.

In \cite{CT}, the authors give some examples showing that some surprising semigroups of
rank $>1$ can occur as   semigroups of valuations on noetherian domains, and raise the general
question of finding new constraints on value semigroups and classifying semigroups which occur as
value semigroups.

The only semigroups which are realized by a valuation on a one dimensional regular local ring are
isomorphic to the natural numbers. The semigroups which are realized by a valuation on a regular
local ring of dimension 2 with algebraically closed residue field are much more complicated, but
are completely classified by Spivakovsky in \cite{S}. A different proof is given by Favre and
Jonsson in \cite{FJ}, and we reformulated the theorem in the context of semigroups in \cite{CT}. However, very little is known in higher dimensions. The classification of
semigroups of valuations on regular local rings of dimension two does suggest that there may be
constraints on the rate of growth of the number of new generators on semigroups of valuations
dominating a noetherian domain. In \cite{C2}, such a constraint is found for rank 1 valuations. We prove in this paper that there is such a constraint for valuations of arbitrary rank.

In \cite{C2}, a very simple polynomial bound is found on the growth of $S^R(\nu)$ for a rank 1 valuation $\nu$. This bound allowed the 
construction  in \cite{C2} of a well ordered subsemigroup of $\QQ_+$ of ordinal type
$\omega$, which is not a value semigroup of a noetherian local domain. Thus the above conditions 1
and 2 do not characterize value semigroups on  local domains.
\par\vskip .2truein\noindent
$\bullet$\textit{Unless otherwise stated, in this text all local rings are assumed to be noetherian. A valuation of a local domain is a valuation $\nu$ of its field of fractions whose ring $R_\nu$ contains $R$ in such a way that $m_\nu\cap R\subseteq m_R$.} 
\vskip .2truein
In Section 1 of this paper, we describe a polynomial behavior of valuation ideals $\P_\phi(R)=\{x\in R\vert \nu(x)\geq \phi\}$ and $\P_\phi^+(R)=\{x\in R\vert \nu(x)> \phi\}$.
Given a valuation $\nu$ with center $p$ on a local domain $R$ we find very general polynomial bounds on the growth of the sums of the multiplicities of the finitely generated $R/p$-modules $\P_\phi(R)/\P^+_\phi(R)$ when $\phi$ runs through growing regions of the value group $\Gamma$ of $\nu$ viewed as a subgroup of $\R^h$. These modules are nonzero precisely when $\phi$ is $0$ or belongs to the semigroup $S^R(\nu)$. Our results therefore also bound the number of elements of $S^R(\nu)$ in those regions. This last result generalizes to all ranks the bound given for rank 1 valuations in \cite{C2} (restated as Theorem \ref{Theorem01} in this paper). The statement and proof for higher rank valuations is significantly more complex.\par
We give  an example (Example \ref{Example3}) of a rank 2 semigroup $T$ which satisfies all restrictions on the semigroup of a valuation on an $s$ dimensional local domain imposed by our polynomial bounds for modules over the rank 1 convex subgroup $\Phi_1$ of the group $\Gamma$ generated by $T$, but is not a valuation semigroup on an $s$ dimensional local domain. The proof uses our most general bound, Theorem \ref{Theorem1}, in the case of  rank 2 valuations.

Our polynomial bounds are estimates of sums over the intersection of $S^R(\nu)$ with certain regions of $\Gamma$. These regions are defined by their intersections with the convex subgroups of $\Gamma$ and depend on a certain function $\tilde\phi$ whose precise definition is given in Definition \ref{Def1} of Section 1. Given a valuation $\nu_{i+1}$ composed with $\nu$ and an element $\phi$ in $\nu_{i+1}(R\setminus \{0\})\subset \Gamma/\Phi_i$, the value $\tilde\phi$ is the smallest element in the semigroup $S^R(\nu_i)$ which projects to $\phi$; it is an element of $\Gamma/\Phi_{i-1}$. A lower limit in the sum at level $i$ is determined by the values traced out by $\tilde\phi$ as $\phi$ varies in the semigroup $\nu_{i+1}(R\setminus \{0\})$, while the upper limit is of the form $\tilde\phi+y_it_i,\ y_i\in \NN$. It is interesting
to consider how close these regions are to being polydiscs. The most desirable
situation is when the value group can be embedded by an order preserving homomorphism into $(\RR^h)_{\mbox{lex}}$ so that all of these regions are polydiscs. In the first examples that one is likely to consider, this is in fact the case. However, the general situation is not so simple. In 
Section 2 we give examples showing that the tilde function can exhibit a rather wild behavior. We show that  we can make $\tilde\phi$ decrease arbitrarily fast, and  that this is independent of the embedding. We also show that $\tilde \phi$ can increase arbitrarily fast, and finally that $\tilde\phi$ can jump back and forth from  negative numbers which decrease arbitrarily fast to 
positive  numbers which increase arbitrarily fast. All these examples are independent of the embedding of $\Gamma$ into $\R^h$. In view of the results of Section 1, the behavior of $\tilde\phi$ is an interesting measure of the complexity of the valuation.

\section{Polynomial bounds on  valuation ideals}
In this section, we derive some very general bounds on the growth of the number of distinct valuation ideals corresponding to values lying in certain parts of the group $\Gamma$.

If $G$ is a totally ordered abelian group, then $G_+$ will denote the positive elements of $G$, and $G_{\ge 0}$ will denote the nonnegative elements.
If $R$ is a local ring, $m_R$ will denote its maximal ideal, and   $\mbox{length}_R(N)$ will denote the length
of an $R$-module $N$.\par
Suppose that $R$ is a domain and $\nu$ is a valuation of $R$. Let $\Gamma$ be the value group of $\nu$. We will denote 
the value semigroup of $\nu$ on $R$ by
$$
S^R(\nu)=\{\nu(f)\mid f\in m_R-\{0\}\}.
$$
$S^R(\nu)$ is a subsemigroup of the nonnegative part, $\Gamma_{\ge 0}$, of $\Gamma$, and if $\nu$ dominates $R$, so that all elements of the maximal ideal $m_R$  of $R$ have positive value, then $S^R(\nu)$ is a subsemigroup of the semigroup $\Gamma_{+}$ of positive elements of $\Gamma$.

Suppose that $I\subset R$ is an ideal. We will write
$$
\nu(I)=\mbox{min}\{\nu(f)\mid f\in I-\{0\}\}.
$$
Note that $\nu(I)\in\Gamma_{\ge 0}$ exists since  $R$ is noetherian.

Suppose that  $\phi$ is an element of the value group $\Gamma$. We will denote by $\P_\phi(R)$ the ideal $\{x\in R\mid \nu(x)\geq \phi\}$ and by $\P^+_\phi(R)$ the ideal $\{x\in R\mid \nu(x)> \phi\}$. When no confusion on the ring is possible we will write $\P_\phi, \P^+_\phi$. We note that $\P_{\phi}(R)/\P_{\phi}^+(R)=0$ if and only if $\phi\notin S^R( \nu)\cup\{0\}$.
The associated graded ring of $\nu$ on $R$ is
$$\mbox{gr}_{\nu}(R)=\bigoplus_{\phi\in\Gamma}\P_{\phi}(R)/\P_{\phi}^+(R).$$
This $(R/m_\nu\cap R)$-algebra is not in general finitely generated; it is graded by the semigroup $S^R(\nu)$, which is not finitely generated in general. Our results can be seen as an extension to these algebras of the classical results on $\NN$-graded finitely generated algebras.

Suppose that $\Gamma$ is a totally ordered abelian group, and $a,b\in\Gamma$. We set
$$
[a,b]=\{x\in\Gamma\mid a\leq x\leq b\}\ \hbox{\rm and}\ [a,b[=\{x\in\Gamma\mid a\leq x< b\}
$$

The concepts of rank of a valuation, the convex (isolated) subgroups of a valuation group and the corresponding composed valuations are discussed in detail in 
Chapter VI of \cite{ZS}.

Suppose that $\nu$ has rank $n$. Let
$$
0=\Phi_0\subset \Phi_1\subset \cdots\subset \Phi_n=\Gamma
$$
be the sequence of convex subgroups of $\Gamma$. Let $\nu_i$, for $1\le i\le n$, be the valuations on the quotient field of $R$ with which $\nu$ is composed.
We have $\nu_1=\nu$. Let
$$
p_n\subseteq \cdots \subseteq p_1
$$
be the corresponding centers on $R$ of $\nu_i$. We define $p_{n+1}=(0)$.

The value group of $\nu_i$ is 
$\Gamma_i=\Gamma/\Phi_{i-1}$.
For $1\le i\le n$, set $$t_i=\nu_i(p_i)\in \Phi_i/\Phi_{i-1}\subseteq\Gamma/\Phi_{i-1}.$$ Let
$$
\lambda_i:\Gamma_i=\Gamma/\Phi_{i-1}\rightarrow \Gamma_{i+1}=\Gamma/\Phi_i
$$
be the corresponding maps from the value group of $\nu_i$ to the value group of $\nu_{i+1}$. If $p_i\ne p_{i+1}$, then $t_i$ is in the kernel of $\lambda_i$, which is a rank 1 group. 
When there is no ambiguity, we denote by $\phi_i$ the image in $\Gamma/\Phi_{i-1}$ of an element $\phi\in \Gamma$.

\begin{definition}\label{Def1}
 Given $\phi_i\in\Gamma/\Phi_{i-1}$, denote by $\tilde\phi_i\in\Gamma/\Phi_{i-2}$ the minimum of $\nu_{i-1}(f)$ for $f\in R$ such that $\nu_i(f)=\phi_i$. 
\end{definition}
This minimum exists since the semigroup $S^R(\nu_{i-1})$ is well ordered. 
Note that $\lambda_{i-1}(\tilde\phi_i)=\phi_i$.

\par\medskip

If $p_{i-1}\ne p_i$, we remark that for $y_{i-1}\in\NN$ and $\phi_{i-1}\in [\tilde \phi_i,\tilde\phi_i+y_{i-1}t_{i-1}]$, we have the inclusions:\par\noindent
$$p_{i-1}^{y_{i-1}}\P_{\phi_i}\subset \P_{\phi_{i-1}}\subset \P_{\phi_i},$$ 
$\P_{\phi_i}=\P_{\tilde \phi_i}$ and since $\Phi_i/\Phi_{i-1}$  is of rank one, the number of elements of $\nu_{i-1}(R\setminus\{0\})$ in the interval $ [\tilde \phi_i,\tilde\phi_i+y_{i-1}t_{i-1}]$ is finite (see \cite{ZS}, \textit{loc. cit.}).\par

\begin{lemma}\label{basic}  Suppose that $p_1\ne p_2$. Then for any function $A$ on $R$-modules with values in $\RR$ which is additive on short exact sequences of finitely generated $R$-modules whose unique minimal prime is $p_1$, we have  for all $y_1\in\NN$: 
\begin{equation}\label{eqA1}
\sum_{\phi_1\in[\tilde \phi_2,\tilde\phi_2+y_1t_1[}A(\P_{\phi_1}/\P_{\phi_1}^+)\leq A(M_{\phi_2}/p_1^{y_1}M_{\phi_2}),
\end{equation}
where $M_{\phi_2}= \P_{\phi_2}/ \P^+_{\phi_2},$ a finitely generated torsion free $R/p_2$-module.\end{lemma}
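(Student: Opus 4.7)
The plan is to exhibit a filtration of $N := M_{\phi_2}/p_1^{y_1}M_{\phi_2} \cong \P_{\phi_2}/(\P_{\phi_2}^{+}+p_1^{y_1}\P_{\phi_2})$ whose successive quotients are the modules $\P_{\phi_1}/\P_{\phi_1}^{+}$ appearing on the left-hand side plus one extra ``remainder'' piece, and then to apply the additivity of $A$. Since $\Phi_1$ has rank one, the remark preceding the lemma gives that the set $\nu_1(R\setminus\{0\})\cap[\tilde\phi_2,\tilde\phi_2+y_1t_1[$ is finite; enumerate its elements in increasing order as $\psi_0=\tilde\phi_2<\psi_1<\cdots<\psi_k$. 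These are exactly the values of $\phi_1$ contributing a nonzero term to the left-hand sum, and $\P_{\psi_i}^{+}=\P_{\psi_{i+1}}$ for $0\le i\le k-1$ since there is no $\nu_1$-value strictly between consecutive $\psi_i$'s.

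Writing $I:=\P_{\phi_2}^{+}+p_1^{y_1}\P_{\phi_2}$, the key step is the containment $I\subseteq\P_{\psi_k}^{+}$. For the second summand, any product $fg$ with $f\in p_1^{y_1}$ and $g\in\P_{\phi_2}$ satisfies $\nu_1(fg)\ge y_1t_1+\tilde\phi_2>\psi_k$. For the first summand, if $\nu_2(f)>\phi_2$ then by the definition of the ordering on $\Gamma/\Phi_1$, the element $\nu_1(f)-\tilde\phi_2$ is strictly greater than every element of the rank-one convex subgroup $\Phi_1$; since $y_1t_1\in\Phi_1$, this forces $\nu_1(f)>\tilde\phi_2+y_1t_1>\psi_k$. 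I expect this latter verification to be the main obstacle, as it requires carefully unwinding the ordering on the quotient $\Gamma/\Phi_1$ inherited from the rank-one structure of $\Phi_1/\Phi_0=\Phi_1$.

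Combining the containment $I\subseteq\P_{\psi_k}^{+}$ with $\P_{\psi_k}^{+}\subsetneq\P_{\psi_k}\subsetneq\cdots\subsetneq\P_{\psi_0}=\P_{\phi_2}$ (using $\P_{\phi_2}=\P_{\tilde\phi_2}$ from the remark) produces the filtration
$$
\P_{\phi_2}/I \;=\; \P_{\psi_0}/I \;\supseteq\; \P_{\psi_1}/I \;\supseteq\; \cdots \;\supseteq\; \P_{\psi_k}/I \;\supseteq\; \P_{\psi_k}^{+}/I \;\supseteq\; 0,
$$
whose successive quotients are $\P_{\psi_i}/\P_{\psi_i}^{+}$ for $i=0,\ldots,k$, followed by the terminal piece $\P_{\psi_k}^{+}/I$. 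Each $\P_{\psi_i}/\P_{\psi_i}^{+}$ is a graded piece of the associated graded ring $\mbox{gr}_{\nu_1}(R)$ (which is a domain because $\nu_1$ is a valuation), finitely generated over $R$, annihilated by $p_1$ and torsion-free over $R/p_1$, so its unique minimal prime as an $R$-module is $p_1$; the terminal piece is a submodule of the $R/p_1^{y_1}$-module $N$ and, when nonzero, also has minimal prime $p_1$. Applying the additivity of $A$ to the filtration yields
$$
A(M_{\phi_2}/p_1^{y_1}M_{\phi_2}) \;=\; \sum_{i=0}^{k} A(\P_{\psi_i}/\P_{\psi_i}^{+}) \;+\; A(\P_{\psi_k}^{+}/I),
$$
and the inequality \eqref{eqA1} follows from the nonnegativity of $A$ on the terminal term.
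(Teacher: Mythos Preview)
Your proof is correct and follows essentially the same approach as the paper's. The only organizational difference is that the paper first filters $\P_{\phi_2}/\P_{\tilde\phi_2+y_1t_1}$ by the $\P_{\tau_i}/\P_{\tilde\phi_2+y_1t_1}$ to obtain the equality $A(\P_{\phi_2}/\P_{\tilde\phi_2+y_1t_1})=\sum A(\P_{\phi_1}/\P_{\phi_1}^+)$, and then uses the single short exact sequence $0\to \P_{\tilde\phi_2+y_1t_1}/I\to \P_{\phi_2}/I\to \P_{\phi_2}/\P_{\tilde\phi_2+y_1t_1}\to 0$ to pass to $M_{\phi_2}/p_1^{y_1}M_{\phi_2}$; you fold this into a single filtration of $\P_{\phi_2}/I$, and your terminal piece $\P_{\psi_k}^+/I$ is exactly the paper's $\P_{\tilde\phi_2+y_1t_1}/I$ since $\P_{\psi_k}^+=\P_{\tilde\phi_2+y_1t_1}$. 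Your verification that $\P_{\phi_2}^+\subseteq\P_{\psi_k}^+$ via the convexity of $\Phi_1$ is a point the paper takes for granted, and your explicit acknowledgment that the final inequality rests on nonnegativity of $A$ on the remainder term is likewise something the paper uses without comment.
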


\begin{proof} 
For $y_1\in\NN$, $[\tilde\phi_2,\tilde\phi_2+t_1y_1[$ intersects $S^R(\nu)\cup\{0\}$ in a finite set $\{\tau_1,\ldots,\tau_r\}$, with 
$$
\tau_1=\tilde\phi_2<\tau_2<\cdots<\tau_r<\tilde\phi_2+t_1y_1.
$$
We have inclusions of $R$ modules whose unique minimal prime is $p_1$,
$$
\P_{\tau_r}/\P_{\tilde\phi_2+t_1y_1}\subset
\P_{\tau_{r-1}}/\P_{\tilde\phi_2+t_1y_1}
\cdots\subset\P_{\tau_1}/\P_{\tilde\phi_2+t_1y_1}=\P_{\phi_2}/\P_{\tilde\phi_2+t_1y_1}.
$$
By the additivity of $A$ we have $$A(\P_{\phi_2}/\P_{\tilde \phi_2+t_1y_1})=\sum_{\phi_1\in[\tilde \phi_2,\tilde\phi_2+y_1t_1[}A(\P_{\phi_1}/\P_{\phi_1}^+).$$ 
From the inclusion $p_1^{y_1}\P_{\phi_2}\subset\P_{\tilde\phi_2+y_1t_1}$, we have an exact sequence of $R$-modules whose unique minimal prime is $p_1$:
$$
0\rightarrow
\P_{\tilde\phi_2+t_1y_1}/(\P_{\phi_2}^++p_1^{y_1}\P_{\phi_2})
\rightarrow \P_{\phi_2}/(\P_{\phi_2}^++p_1^{y_1}\P_{\phi_2})
\rightarrow \P_{\phi_2}/\P_{\tilde\phi_2+t_1y_1}\rightarrow 0.
$$
Since 
$$
M_{\phi_2}/p_1^{y_1} M_{\phi_2}\cong\P_{\phi_2}/(\P_{\phi_2}^++p_1^{y_1}\P_{\phi_2}),
$$
we have that 
$$
A(\P_{\phi_2}/\P_{\tilde\phi_2+t_1y_1})\le A(M_{\phi_2}/p_1^{y_1} M_{\phi_2}),
$$
and the conclusions of the lemma follow.
\end{proof}

Suppose that $p_0$ is a prime ideal of $R$ such that $p_2\subset p_1\subset p_0$. Let $e_{m_i}(N)$ denote the multiplicity of an $R_{p_i}$ module $N$ with respect to $m_i=p_iR_{p_i}$.

From the above lemma, we immediately deduce the following result. 

\begin{Theorem}\label{Theorem01} Let $R$ be a local domain and $\nu$ a rank 1 valuation of $R$. Let $p_0$ be a prime ideal ideal of $R$ containing the center $p_1$ of $\nu$.
  Then
$$
\sum_{\phi\in[0,yt_1[}e_{m_0}((\P_{\phi}/\P_{\phi}^+)_{p_0})
\le e_{m_0}((R/p_1)_{p_0})\hbox{\rm length}_{R_{p_1}}(R_{p_1}/p_1^yR_{p_1})
$$
for all $y\in\NN$. Thus we have
$$
\sum_{\phi\in[0,yt_1[}e_{m_0}((\P_{\phi}/\P_{\phi}^+)_{p_0})
\le e_{m_0}((R/p_1)_{p_0})P_{R_{p_1}}(y)
$$
for $y\gg 0$, where $P_{R_{p_1}}(y)$ is the Hilbert-Samuel polynomial of the local ring $R_{p_1}$.
\end{Theorem}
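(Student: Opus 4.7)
The plan is to deduce Theorem \ref{Theorem01} from Lemma \ref{basic} by specializing to rank $1$ and choosing a suitable additive function. Since $\nu$ has rank $1$, we have $\Phi_1 = \Gamma$, so $\nu_2$ is the trivial valuation on $K$ with center $p_2 = (0) \neq p_1$, and the hypothesis of Lemma \ref{basic} is satisfied. The only value of $\nu_2$ on $R\setminus\{0\}$ is $\phi_2 = 0$, and since units of $R$ have $\nu$-value $0$, Definition \ref{Def1} gives $\tilde\phi_2 = 0$. Hence $\P_{\phi_2}(R) = R$, $\P^+_{\phi_2}(R) = (0)$, $M_{\phi_2} = R$, and the interval $[\tilde\phi_2, \tilde\phi_2 + yt_1[$ reduces to $[0, yt_1[$.

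Next I would take the additive function to be $A(N) := e_{m_0}(N_{p_0})$ for finitely generated $R$-modules $N$ whose unique minimal prime is $p_1$. For any such $N$, the localization $N_{p_0}$ is supported on $V(p_1 R_{p_0})$ and has dimension equal to $d := \dim R_{p_0}/p_1 R_{p_0}$, so the standard additivity of Hilbert--Samuel multiplicities on modules of common dimension $d$ shows that $A$ is additive on short exact sequences in this class. Applying Lemma \ref{basic} with this $A$ gives
$$
\sum_{\phi \in [0, yt_1[} e_{m_0}\bigl((\P_\phi/\P_\phi^+)_{p_0}\bigr) \;\leq\; e_{m_0}\bigl((R/p_1^y)_{p_0}\bigr).
$$

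To convert the right-hand side into the form stated in the theorem, I would invoke the associativity formula for multiplicities applied to the $R_{p_0}$-module $(R/p_1^y)_{p_0}$, which is filtered by the submodules $(p_1^i/p_1^y)_{p_0}$ with $(R/p_1)_{p_0}$-quotients. Its unique minimal prime of maximum dimension is $p_1 R_{p_0}$, at which the localization has length $\ell_{R_{p_1}}(R_{p_1}/p_1^y R_{p_1})$, whence
$$
e_{m_0}\bigl((R/p_1^y)_{p_0}\bigr) \;=\; e_{m_0}\bigl((R/p_1)_{p_0}\bigr)\,\ell_{R_{p_1}}(R_{p_1}/p_1^y R_{p_1}).
$$
This yields the first inequality of the theorem; the polynomial form follows at once, since for $y \gg 0$ the length $\ell_{R_{p_1}}(R_{p_1}/p_1^y R_{p_1})$ coincides with the Hilbert--Samuel polynomial $P_{R_{p_1}}(y)$ of $R_{p_1}$ with respect to $p_1 R_{p_1}$. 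The only potentially delicate point is the additivity of $A$, which requires restriction to equidimensional modules of common dimension $d$, but this is a standard fact, so no serious obstacle is expected.
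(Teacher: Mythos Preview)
Your proposal is correct and follows essentially the same route as the paper: specialize Lemma~\ref{basic} to the rank~$1$ case with $\phi_2=0$, $\tilde\phi_2=0$, $M_{\phi_2}=R$, take $A(N)=e_{m_0}(N_{p_0})$, and then apply the associativity formula for multiplicities (cited in the paper as \cite{B}, Section~7, no.~1, Proposition~3) to factor $e_{m_0}(R_{p_0}/p_1^yR_{p_0})$. Your additional remarks on the additivity of $A$ are sound and simply make explicit what the paper leaves implicit.
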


\begin{proof}
This follows from Lemma \ref{basic}, with $A(N)=e_{m_0}(N\otimes R_{p_0})$ for $R$-modules $N$. We take $\nu=\nu_1$, $\Gamma_2=0$, $\phi_2=0$, and $p_1\subset p_0$, so that $\tilde\phi_2=0$ and $M_{\phi_2}=R$, to get for $y\in\NN$,
$$
\sum_{\phi\in[0,yt_1[}e_{m_0}((\P_{\phi}/\P_{\phi}^+)_{p_0})
\le e_{m_0}(R_{p_0}/p_1^yR_{p_0}).
$$
The conclusions of the theorem now follow from the associativity formula for
multiplicity, \cite{B}, Section 7, no. 1, Proposition 3, which shows that
$$
e_{m_0}(R_{p_0}/p_1^yR_{p_0})=e_{m_0}((R/p_1)_{p_0})
e_{m_1}(R_{p_1}/p_1^yR_{p_1})
= e_{m_0}((R/p_1)_{p_0})\hbox{\rm length}_{R_{p_1}}(R_{p_1}/p_1^yR_{p_1}).
$$

\end{proof}

If $\nu$ has rank 1, and dominates $R$, so that $p_1=p_0=m_R$ is the maximal ideal of $R$, we obtain the inequality of \cite{C2},
\begin{equation}\label{eqL12}
\#(S^R(\nu)\cap [0,yt_1[\,)<P_R(y)
\end{equation} 
for $y\in\NN$ sufficiently large. This follows from Theorem \ref{Theorem01}
since
 $\P_{\phi}/\P_{\phi}^+\ne 0$ if and only if
there exists $f\in R$ such that $\nu(f)=\phi$, and since $\phi=0$ is not
in $S^R(\nu)$.

As was shown in \cite{C2}, we may now easily construct a  well ordered subsemigroup $U$ of $\QQ_+$ such that $U$ has ordinal
type $\omega$ and $U\ne S^R(\nu)$ for any valuation $\nu$ dominating a local domain $R$. We let $T$ be any subset of $\QQ_+$ which has 1 as its smallest element, and
$$
y^y<\#([y,y+1[\,)\cap T)<\infty
$$
for all $y\in\ZZ_+$. Let $U=\bigcup_{n=1}^{\infty}nT$ be the semigroup generated by $T$. Then $U$ is well ordered by a result of B. H. Neumann (see \cite{Ne}), and the function $\#([0,y[\,\,\cap U)$ grows faster than $y^d$ for any $d\in\NN$. Since the Hilbert-Samuel polynomial of a noetherian local domain $R$ has degree $d=\hbox{\rm dim }R<\infty$, it follows from formula (\ref{eqL12}) that $U$ cannot be the semigroup of a valuation dominating a noetherian local domain.

Suppose now that $p_0$ is a prime ideal of $R$ such that $p_2\subseteq p_1\subseteq p_0$. Let $e_{m_i}(N)$ denote the multiplicity of an $R_{p_i}$ module $N$ with respect to $m_i=p_iR_{p_i}$.\par
If $p_2=p_1$, so that $t_1$ is not in the kernel of $\lambda_1$, let us define
$$
\phi_2^+=\mbox{min}\{\nu_2(f)\mid f\in R\mbox{ and }\nu_2(f)>\phi_2\}.
$$ 
By \cite{ZS} (Appendix 3, Corollary to Lemma 4), the interval $[\tilde \phi_2,\widetilde{\phi_2^+}[$ contains only finitely many elements of $S^R(\nu)$.
\begin{theorem}\label{TheoremA2}\par\noindent Let $R$ be a local domain and $\nu_1, \nu_2$ two valuations of $R$ such that $\nu_1$ is composed with $\nu_2$ and the difference of their ranks is equal to one. Let $p_0$ be a prime ideal of $R$ containing the centers $p_2\subseteq p_1$ of $\nu_1$ and $\nu_2$. Then we have:\par\noindent  a) Suppose that $p_1\ne p_2$. Then there exists 
a function $s(\epsilon,\phi_2)$ such that for $\phi_2\in\Gamma_2$, $\epsilon>0$ and $y_1\in\NN$ such that $y_1>s(\epsilon,\phi_2)$, we
have
$$
\begin{array}{l}
\sum_{\phi_1\in[\tilde \phi_2,\tilde\phi_2+y_1t_1[}e_{m_0}(\P_{\phi_1}(R_{p_0})/\P_{\phi_1}^+(R_{p_0}))\\ \leq
(1+\epsilon)\frac{e_{m_0}((R/p_1)_{p_0})}{(\hbox{\rm dim }(R/p_2)_{p_1})!}e_{m_1}(\P_{\phi_2}(R_{p_1})/\P_{\phi_2}^+(R_{p_1}))y_1^{\hbox{\rm dim }(R/p_2)_{p_1}}.
\end{array}
$$
b) If $p_1=p_2$ we have the equalities $$\begin{array}{ll}
\sum_{\phi_1\in[\tilde\phi_2,\widetilde{\phi_2^+[}}e_{m_0}(\P_{\phi_1}(R_{p_0})/\P_{\phi_1}^+(R_{p_0}))&= e_{m_0}(\P_{\phi_2}(R_{p_0})/\P_{\phi_2}^+(R_{p_0}))\\&=e_{m_0}((R/p_1)_{p_0})e_{m_1}(\P_{\phi_2}(R_{p_1})/\P_{\phi_2}^+(R_{p_1}))
.\end{array}$$ 
\end{theorem}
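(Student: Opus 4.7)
The plan is to derive part (a) from Lemma \ref{basic} together with the associativity formula for multiplicities and standard Hilbert--Samuel asymptotics, and to treat part (b) by a direct filtration argument exploiting the structure of $S^R(\nu)$ when $p_1 = p_2$.

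For part (a), I would set $A(N) = e_{m_0}(N_{p_0})$ on finitely generated $R$-modules $N$ whose unique minimal prime is $p_1$. The associativity formula (\cite{B}, Section~7, no.~1, Proposition~3) identifies
$$A(N) = e_{m_0}((R/p_1)_{p_0}) \cdot \mathrm{length}_{R_{p_1}}(N_{p_1}),$$
so $A$ is additive on short exact sequences in this subcategory. Lemma~\ref{basic} then bounds the left-hand sum by
$$e_{m_0}((R/p_1)_{p_0}) \cdot \mathrm{length}_{R_{p_1}}\bigl((M_{\phi_2})_{p_1}/p_1^{y_1}(M_{\phi_2})_{p_1}\bigr).$$
Since $M_{\phi_2}$ is torsion-free over $R/p_2$, $(M_{\phi_2})_{p_1}$ is a finitely generated module over $(R/p_2)_{p_1}$ of dimension $d = \dim(R/p_2)_{p_1}$. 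Its Hilbert--Samuel polynomial with respect to the $m_1$-primary ideal $p_1 R_{p_1}$ gives, for $y_1$ beyond some threshold $s(\epsilon,\phi_2)$,
$$\mathrm{length}_{R_{p_1}}\bigl((M_{\phi_2})_{p_1}/p_1^{y_1}(M_{\phi_2})_{p_1}\bigr) \leq (1+\epsilon)\, \frac{e_{m_1}((M_{\phi_2})_{p_1})}{d!}\, y_1^d.$$
Combined with the identification $(M_{\phi_2})_{p_1} = \P_{\phi_2}(R_{p_1})/\P_{\phi_2}^+(R_{p_1})$ (localization commutes with the quotient, and $\nu_2$ extends to $R_{p_1}$ since $p_2 \subset p_1$), this yields the asserted inequality.

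For part (b), the hypothesis $p_1 = p_2$ is equivalent to $t_1 \notin \Phi_1$, i.e.\ $\lambda_1(t_1) > 0$. The key observation to establish first is that $S^R(\nu) \cap [\tilde\phi_2,\widetilde{\phi_2^+}[$ consists of exactly those $\phi_1$ with $\lambda_1(\phi_1) = \phi_2$: since $\lambda_1$ is order-preserving, any such $\phi_1$ projects into $\{\phi_2,\phi_2^+\}$ (no $\nu_2$-value lies strictly between), and those projecting to $\phi_2^+$ are all $\geq \widetilde{\phi_2^+}$ by definition. Consequently $\P_{\phi_2}(R) = \P_{\tilde\phi_2}(R)$ and $\P_{\phi_2}^+(R) = \P_{\widetilde{\phi_2^+}}(R)$. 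Filtering $\P_{\tilde\phi_2}/\P_{\widetilde{\phi_2^+}}$ along the chain indexed by the finite set $S^R(\nu) \cap [\tilde\phi_2,\widetilde{\phi_2^+}[$ and applying additivity of $e_{m_0}$ to the successive quotients $\P_\tau/\P_\tau^+$ yields the first equality. The second equality follows from the associativity formula applied to the torsion-free $(R/p_1)_{p_0}$-module $(\P_{\phi_2}/\P_{\phi_2}^+)_{p_0}$, noting that $(\P_{\phi_2}/\P_{\phi_2}^+)_{p_1}$ is annihilated by $p_1 R_{p_1}$ (since $p_1 = p_2$) and so its length as an $R_{p_1}$-module equals its $m_1$-multiplicity. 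The main subtlety is this set-theoretic characterization in part~(b): in part~(a) the delicate lex-order analysis is already packaged into Lemma~\ref{basic}, whereas here one must use the hypothesis $p_1 = p_2$ directly to force the filtration to telescope exactly rather than merely give an inequality.
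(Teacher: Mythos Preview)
Your proof is correct and follows essentially the same approach as the paper. For part (a) the paper applies Lemma~\ref{basic} with $A(N)=e_{m_0}(N_{p_0})$ first and then invokes the associativity formula on the single module $(M_{\phi_2})_{p_0}/p_1^{y_1}(M_{\phi_2})_{p_0}$, whereas you factor $A$ through the associativity formula before applying the lemma; this is only a cosmetic reordering. For part (b) the paper is terser---it simply notes $p_1(\P_{\phi_2}/\P_{\phi_2}^+)=0$ and appeals to additivity and associativity---while you spell out the set-theoretic identification of $S^R(\nu)\cap[\tilde\phi_2,\widetilde{\phi_2^+}[$ with the fibre $\lambda_1^{-1}(\phi_2)\cap S^R(\nu)$, which is the content implicit in the paper's one-line argument.
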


\begin{proof} Assume first that $p_1\neq p_2$. Taking $A(N)=e_{m_0}(N_{p_0})$ in Lemma \ref{basic},
and using the identities $\P_{\phi_1}(R_{p_0})\cong (\P_{\phi_1})_{p_0}$,
we obtain
$$
\sum_{\phi_1\in[\tilde \phi_2,\tilde\phi_2+y_1t_1[}e_{m_0}(\P_{\phi_1}(R_{p_0})/\P_{\phi_1}^+(R_{p_0}))\leq
e_{m_0}((M_{\phi_2})_{p_0}/p_1^{y_1} (M_{\phi_2})_{p_0}).
$$
Since $(p_1)_{p_0}$ is the unique  minimal prime of  $(M_{\phi_2})_{p_0}/p_1^{y_1}(M_{\phi_2})_{p_0}$, by \cite{B},  Section 7, no. 1, Proposition 3, we have
$$
e_{m_0}((M_{\phi_2})_{p_0}/p_1^{y_1} (M_{\phi_2})_{p_0})
=\hbox{\rm length}_{R_{p_1}}((M_{\phi_2})_{p_1}/p_1^{y_1} (M_{\phi_2})_{p_1})e_{m_0}((R/p_1)_{p_0}).
$$
Since  $p_2$ is the unique minimal prime of  $M_{\phi_2}$, there exists a function $\overline s(\phi_2)$ such that 
$$
\begin{array}{lll}
\hbox{\rm length}_{R_{p_1}}((M_{\phi_2})_{p_1}/p_1^{y_1} (M_{\phi_2})_{p_1})
&=&H_{(M_{\phi_2})_{p_1}}(y_1)\\
&=&\frac{e_{m_1}((M_{\phi_2})_{p_1})}{(\hbox{\rm dim}(R/p_2)_{p_1})!}
y_1^{\hbox{\rm dim}(R/p_2)_{p_1}}+\hbox{\rm lower order terms in $y_1$}
\end{array}
$$
for $y_1\ge\overline s(\phi_2)$, where $H_{(M_{\phi_2})_{p_1}}(y_1)$
is the Hilbert-Samuel polynomial of $(M_{\phi_2})_{p_1}$.
This polynomial bound implies that there exists a function $s(\epsilon,\phi_2)$ such that
$$
\mbox{length}_{R_{p_1}}((M_{\phi_2})_{p_1}/p_1^{y_1}(M_{\phi_2})_{p_1})\le (1+\epsilon)\frac{e_{m_1}((M_{\phi_2})_{p_1})}{(\hbox{\rm dim}(R/p_2)_{p_1})!}
y_1^{\hbox{\rm dim}(R/p_2)_{p_1}}
$$
for $y_1\ge s(\epsilon,\phi_2)$.\par\medskip 
If $p_1=p_2$, 
 We have $p_1(\P_{\phi_2}/\P_{\phi_2}^+)=p_2(\P_{\phi_2}/\P_{\phi_2}^+)=0$, so the first inequality stated in this case follows directly from the additivity of the multiplicity $e_{m_0}$. The second equality follows from the first and the associativity formula of \cite{B},  Section 7, no. 1, Proposition 3.
\end{proof}
\begin{corollary}
Suppose that $p_n=\cdots=p_2=p_1$.  Then
$$
\sum_{\phi_n\in[0,yt_n[}\sum_{\phi_{n-1}\in[\tilde\phi_n,\tilde\phi_n^+[}\cdots\sum_{\phi_1\in[\tilde\phi_2,\tilde\phi_2^+[} e_{m_0}((\P_{\phi_1}/\P_{\phi_1}^+)
\le e_{m_0}((R/p_1)_{p_0})\mbox{length}_{R_{p_1}}(R_{p_1}/p_1^yR_{p_2})
$$
for all $y\in\NN$. Thus we have
$$
\sum_{\phi_n\in[0,yt_n[}\sum_{\phi_{n-1}\in[\tilde\phi_n,\tilde\phi_n^+[}\cdots\sum_{\phi_1\in[\tilde\phi_2,\tilde\phi_2^+[} e_{m_0}((\P_{\phi_1}/\P_{\phi_1}^+)
\le e_{m_0}((R/p_1)_{p_0})P_{R_{p_1}}(y)
$$
for $y\gg 0$, where $P_{R_{p_1}}(y)$ is the Hilbert-Samuel polynomial of $R_{p_1}$.

\end{corollary}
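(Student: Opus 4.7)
The plan is to use the hypothesis $p_n=p_{n-1}=\cdots=p_1$ to collapse all but the outermost sum by repeated application of the equality in Theorem~\ref{TheoremA2}~b), and then to bound the remaining rank one sum by Theorem~\ref{Theorem01}.

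First I would record an analog of part~b) of Theorem~\ref{TheoremA2} valid at every level: for each $1\le i\le n-1$, since $p_i=p_{i+1}$, the module $\P_{\phi_{i+1}}/\P_{\phi_{i+1}}^+$ is annihilated by $p_{i+1}=p_i$, and the same filtration-and-additivity argument given for $i=1$ yields
$$
\sum_{\phi_i\in[\tilde\phi_{i+1},\widetilde{\phi_{i+1}^+}[}e_{m_0}\bigl((\P_{\phi_i}/\P_{\phi_i}^+)_{p_0}\bigr)
=e_{m_0}\bigl((\P_{\phi_{i+1}}/\P_{\phi_{i+1}}^+)_{p_0}\bigr).
$$
Here the successor $\phi_{i+1}^+$ of $\phi_{i+1}$ in $\nu_{i+1}(R\setminus\{0\})$ is well defined by the Corollary to Lemma~4 in Appendix~3 of~\cite{ZS}, applied to the two composed valuations of rank difference one surrounding the $i$-th level.

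Next, I would iterate this equality from $i=1$ up to $i=n-1$. Applying it to the innermost sum turns the sum over $\phi_1$ into $e_{m_0}(\P_{\phi_2}/\P_{\phi_2}^+)$, which is then the integrand for the sum over $\phi_2$; that sum collapses in turn to $e_{m_0}(\P_{\phi_3}/\P_{\phi_3}^+)$, and so on. After $n-1$ such collapses, the nested sum reduces to
$$
\sum_{\phi_n\in[0,yt_n[}e_{m_0}\bigl((\P_{\phi_n}/\P_{\phi_n}^+)_{p_0}\bigr).
$$
Because $\nu_n$ is a rank one valuation whose center on $R$ is $p_n=p_1\subseteq p_0$, I can now apply Theorem~\ref{Theorem01} with $\nu$ replaced by $\nu_n$ (and $t_1$ by $t_n$). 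This directly yields the first asserted inequality, and the polynomial statement then follows because $\hbox{\rm length}_{R_{p_1}}(R_{p_1}/p_1^yR_{p_1})$ agrees with the Hilbert-Samuel polynomial $P_{R_{p_1}}(y)$ for $y\gg 0$.

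The only non-routine point is the justification of the analog of part~b) at the intermediate levels $i>1$. This requires only that $p_i$ annihilates $\P_{\phi_{i+1}}/\P_{\phi_{i+1}}^+$, which is immediate from $p_i=p_{i+1}$, so it is essentially a rewording of the original proof of part~b) of Theorem~\ref{TheoremA2}. No new technical ingredients beyond those already used in Lemma~\ref{basic} and Theorem~\ref{TheoremA2} are required; the iteration itself is a formal telescoping.
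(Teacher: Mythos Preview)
Your proposal is correct and uses essentially the same ingredients as the paper: the inner sums are collapsed via the equality of Theorem~\ref{TheoremA2}~b) (applied at each level $i$ with $p_i=p_{i+1}$), and the remaining rank-one sum over $\phi_n$ is bounded by Theorem~\ref{Theorem01}. The paper organizes this as an induction on $n$ (applying the corollary first to $\nu_2$ with mute prime $q_0=p_1$ and multiplicity $e_{m_1}$, then converting via the second equality in Theorem~\ref{TheoremA2}~b)), whereas you keep $e_{m_0}$ throughout and telescope directly---a purely cosmetic difference.
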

\begin{proof}
We will prove the formula by induction on the rank $n$ of the valuation. If 
$n=1$, this is just the statement of Theorem \ref{Theorem01}.
We will assume that the formula is true for valuations of rank $<n$, and derived the formula for a rank $n$ valuation $\nu$. Let $\nu_2$ be the rank $n-1$ valuation which $\nu$ is composite with. Consider the chain of ideals
$$
(0)=q_n\subset q_{n-1}=\cdots =q_1=q_0,
$$
where $q_{n-1}=p_n,\ldots,q_1=p_2$ are the centers on $R$ of the successive valuations $\nu_n,\ldots,\nu_2$ with which $\nu_2$ is composed, and $q_0=p_1$.
We obtain
\begin{equation}\label{eqL10}
\sum_{\phi_n\in[0,yt_n[}
\sum_{\phi_{n-1}\in[\tilde\phi_n,\tilde\phi_n^+[}\cdots
\sum_{\phi_2\in[\tilde\phi_3,\tilde\phi_3^+[}e_{m_1}(\P_{\phi_2}/\P_{\phi_2}^+)
\le e_{m_1}((R/p_2)_{p_1})P_{R_{p_2}}(y)=P_{R_{p_1}}(y)
\end{equation}
for $y\gg 0$. We apply Theorem \ref{TheoremA2} to the valuations $\nu=\nu_1$ and $\nu_2$ and $p_2= p_1\subset p_0$ to obtain for $\phi_2\in[\tilde\phi_3,\tilde\phi_3^+[$ (or $\phi_2\in [\tilde\phi_n,\tilde\phi_n^++t_ny[$ if $n=3$), 
\begin{equation}\label{eqL11}
\sum_{\phi_1\in[\tilde\phi_2,\tilde\phi_2^+[}
e_{m_0}((\P_{\phi_1}/\P_{\phi_1}^+)_{p_0})\le
e_{m_0}((R/p_1)_{p_0})e_{m_1}((\P_{\phi_2}/\P_{\phi_2}^+)_{p_1}).
\end{equation}
Now sum over (\ref{eqL10}) and (\ref{eqL11}) to obtain the formula for $\nu$.

\end{proof}

\begin{corollary} In the special case where $\nu_1$ is a valuation of rank one and $\nu_2$ is the trivial valuation, we have $p_2=0$, $\Gamma_2=0$, $\tilde \phi_2=0\in \Gamma_1$ and  the inequality of Theorem \ref{TheoremA2} reduces to:
\begin{equation}\label{eq30}
\begin{array}{l}
\sum_{\phi_1\in[0,y_1t_1[}e_{m_0}(\P_{\phi}(R_{p_0})/\P_{\phi}^+(R_{p_0}))\\ \leq
(1+\epsilon)e_{m_0}((R/p_1)_{p_0})\frac{e_{m_1}(R_{p_1})}{(\hbox{\rm dim}R_{p_1})!}y_1^{\hbox{\rm dim}R_{p_1}}
\end{array}
\end{equation}
for $y_1>s(\epsilon)$.
\end{corollary}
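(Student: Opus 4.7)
The plan is to deduce this as a direct specialization of Theorem~\ref{TheoremA2}(a) to the case where $\nu_2$ is the trivial valuation. The hypothesis $p_1\ne p_2$ required for part (a) is automatic: $\nu_1$ has rank one, hence is non-trivial, so $p_1\ne 0=p_2$. It therefore suffices to evaluate each factor on the right-hand side of the inequality of Theorem~\ref{TheoremA2}(a) under the chosen data and verify that they reduce to the factors appearing in the statement.

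Under these choices, $\Gamma_2=\Gamma/\Phi_1=0$, so $\phi_2=0$ is forced. Because $\nu_2$ is trivial, $\nu_2(f)=0$ for every nonzero $f\in R$, hence the $\nu_2$-valuation ideals are $\P_0(R)=R$ and $\P_0^+(R)=(0)$. Consequently $M_{\phi_2}=\P_{\phi_2}(R)/\P_{\phi_2}^+(R)=R$, so $e_{m_1}(\P_{\phi_2}(R_{p_1})/\P_{\phi_2}^+(R_{p_1}))=e_{m_1}(R_{p_1})$, and $\dim(R/p_2)_{p_1}=\dim R_{p_1}$ since $p_2=(0)$. For the lower endpoint of the sum, Definition~\ref{Def1} gives $\tilde\phi_2=\min\{\nu_1(f):f\in R,\ \nu_2(f)=0\}$; the constraint $\nu_2(f)=0$ is vacuous on $R\setminus\{0\}$, and $\nu_1(1)=0$ together with $\nu_1\ge 0$ on $R$ yields $\tilde\phi_2=0$. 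The left-hand sum therefore ranges over $\phi_1\in[0,y_1t_1[$, matching the statement.

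Substituting these identifications into the inequality of Theorem~\ref{TheoremA2}(a) produces exactly the claimed bound, valid for $y_1>s(\epsilon,0)=:s(\epsilon)$. I do not foresee any genuine obstacle; the only mildly subtle point is the admissibility of the trivial valuation as $\nu_2$. This convention is already used silently in the proof of Theorem~\ref{Theorem01}, where exactly the same identifications $\Gamma_2=0$, $\phi_2=0$, $M_{\phi_2}=R$ are made, so the same substitution is legitimate here.
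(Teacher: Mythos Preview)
Your proposal is correct and follows exactly the approach the paper intends: the corollary is stated without a separate proof, the identifications $p_2=0$, $\Gamma_2=0$, $\tilde\phi_2=0$ being built into its statement, and you have simply spelled out why each factor of Theorem~\ref{TheoremA2}(a) specializes as claimed. Your observation that $p_1\ne p_2$ is automatic (so part (a) applies) and your appeal to the analogous substitution in the proof of Theorem~\ref{Theorem01} are both appropriate.
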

\vskip .2truein
\begin{corollary} Taking $p_0=p_1$, we deduce  that when $p_1\neq p_2$ we have for $y_1>s(\epsilon,\phi_2)$ an inequality
$$
\begin{array}{l}
\#(\lambda_1^{-1}(\phi_2)\cap S^R(\nu)\cap [\tilde \phi_2,\tilde\phi_2+y_1t_1[ )\\ \leq
(1+\epsilon)\frac{e_{m_1}(\P_{\phi_2}(R_{p_1})/\P_{\phi_2}^+(R_{p_1}))}{(\hbox{\rm dim}(R/p_2)_{p_1})!}y_1^{\hbox{\rm dim}(R/p_2)_{p_1}}.
\end{array}
$$
When $p_1=p_2$, we have
$$
\#(\lambda_1^{-1}(\phi_2)\cap S^R(\nu))\le \hbox{\rm length}_{R_{p_1}}(\P_{\phi_2}(R_{p_1})/\P_{\phi_2}^+(R_{p_1}))<\infty.
$$
\par\noindent
\vskip .2truein
\end{corollary}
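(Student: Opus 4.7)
The plan is to specialize Theorem \ref{TheoremA2} to the case $p_0=p_1$, in which the multiplicity $e_{m_0}$ collapses to length and every nonzero $\P_{\phi_1}/\P_{\phi_1}^+$ contributes at least $1$ to the sum on the left-hand side. This converts the bound on a sum of multiplicities into a bound on a cardinality.

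I would use two preliminary observations. (i) When $p_0=p_1$, one has $(R/p_1)_{p_0}=k(p_1)$, so $e_{m_0}((R/p_1)_{p_0})=1$. (ii) For every $\phi_1\in\Gamma$, since $t_1=\nu(p_1)>0$ we have $p_1\P_{\phi_1}\subseteq\P_{\phi_1+t_1}\subseteq\P_{\phi_1}^+$, so $\P_{\phi_1}/\P_{\phi_1}^+$ is annihilated by $p_1$ and hence $(\P_{\phi_1}/\P_{\phi_1}^+)_{p_1}$ is annihilated by $m_1$; its $m_1$-multiplicity therefore agrees with its $R_{p_1}$-length, which is at least $1$ precisely when $\phi_1\in S^R(\nu)\cup\{0\}$.

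For part (a), with $p_1\ne p_2$, I would first check that $[\tilde\phi_2,\tilde\phi_2+y_1t_1[\,\subseteq\lambda_1^{-1}(\phi_2)$: since $t_1\in\Phi_1=\ker\lambda_1$ and $\Phi_1$ is convex, any $\phi_1$ in this interval satisfies $\phi_1-\tilde\phi_2\in\Phi_1$, whence $\lambda_1(\phi_1)=\phi_2$. Combined with (ii), this yields
\[
\#\bigl(\lambda_1^{-1}(\phi_2)\cap S^R(\nu)\cap[\tilde\phi_2,\tilde\phi_2+y_1t_1[\bigr)
\le \sum_{\phi_1\in[\tilde\phi_2,\tilde\phi_2+y_1t_1[}e_{m_0}\bigl((\P_{\phi_1}/\P_{\phi_1}^+)_{p_0}\bigr).
\]
The right-hand side is bounded via Theorem \ref{TheoremA2}(a), in which the factor $e_{m_0}((R/p_1)_{p_0})$ equals $1$ by (i), producing exactly the claimed polynomial bound.

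For part (b), with $p_1=p_2$, the inclusion $\lambda_1^{-1}(\phi_2)\cap S^R(\nu)\subseteq[\tilde\phi_2,\widetilde{\phi_2^+}[$ is almost immediate: the lower endpoint is the definition of $\tilde\phi_2$, and $\phi_1\ge\widetilde{\phi_2^+}$ would force $\lambda_1(\phi_1)\ge\phi_2^+>\phi_2$ by order-preservation of $\lambda_1$, contradicting $\phi_1\in\lambda_1^{-1}(\phi_2)$. Applying Theorem \ref{TheoremA2}(b) with $p_0=p_1$ and using (i) shows that the sum over $[\tilde\phi_2,\widetilde{\phi_2^+}[$ of $e_{m_0}((\P_{\phi_1}/\P_{\phi_1}^+)_{p_0})$ equals $e_{m_1}(\P_{\phi_2}(R_{p_1})/\P_{\phi_2}^+(R_{p_1}))$, which in turn equals the $R_{p_1}$-length of $\P_{\phi_2}(R_{p_1})/\P_{\phi_2}^+(R_{p_1})$ since $p_1=p_2$ annihilates it. Observation (ii) then bounds the cardinality on the left by this sum, giving both the stated inequality and the finiteness. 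The argument presents no serious obstacle beyond the two observations above; the essential point is the conversion of multiplicity into length via the annihilator by $p_1$.
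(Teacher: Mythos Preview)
Your proof is correct and is precisely the argument the paper has in mind: the corollary is stated without proof, as an immediate specialization of Theorem \ref{TheoremA2} to $p_0=p_1$, and your observations (i) and (ii)---that $e_{m_0}((R/p_1)_{p_0})=1$ and that $p_1$ annihilates each $\P_{\phi_1}/\P_{\phi_1}^+$ so multiplicity becomes length---are exactly the details needed to pass from the multiplicity bound to the cardinality bound.
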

Let $$
(0)=p_{n+1}\subset p_n\subseteq \cdots \subseteq p_1
$$
be the centers of the valuations with which $\nu$ is composed. Define $I=\{i\in \{1,\ldots ,n\}/ p_i\neq p_{i+1}\}$ and note that $n\in I$. By \cite{ZS} (Appendix 3) we know that if $i\notin I$, if we define $$\phi_i^+=\hbox{\rm min}\{\nu_i(f)| f\in R\ \  \hbox{\rm   and } \nu_i(f)>\phi_i\},$$ 
the intersection $S^R(\nu)\cap [\tilde\phi_i, \widetilde {\phi_i^+}[$ is finite. Let us agree that in this case,  for large $y_i$ the interval $[\tilde\phi_{i+1},\tilde\phi_{i+1}+t_iy_i[$ coincides with this intersection. Remember also that if $i\notin I$ we have $\hbox{\rm dim}(R/p_{i+1})_{p_i}=0$ and $e_{m_i}((R/p_{i+1})_{p_i})=(\hbox{\rm dim}(R/p_{i+1})_{p_i})! =1$.
\begin{theorem}\label{Theorem1} Let $R$ be a local domain and $\nu$ a valuation of $R$ which is of rank $n$. There exist functions
$s_n(\epsilon)$ and $s_i(\epsilon,y_{i+1},y_{i+2},\ldots ,y_n)$ for $1\le i\le n-1$, such that, using the notations and conventions introduced above, 
we have 
$$
\begin{array}{l}
\sum_{\phi_n\in [0,t_ny_n[}\,\,\,\,\,\sum_{\phi_{n-1}\in [
\tilde\phi_n,\tilde\phi_n+t_{n-1}y_{n-1}[}\,\,\,\,\,\cdots \,\,\,\,\,\sum_{\phi_1\in [
\tilde\phi_{2},\tilde\phi_{2}+t_1y_1[}e_{m_0}((\P_{\phi_1}/\P^+_{\phi_1})_{p_0})
\\
\\
\le (1+\epsilon)\frac{\prod_{i=0}^n e_{m_i}((R/p_{i+1})_{p_i})}{\prod_{i=1}^n(\hbox{\rm dim}(R/p_{i+1})_{p_i})!} \prod_{i=1}^ny_i^{\hbox{\rm dim}(R/p_{i+1})_{p_i}}
\end{array}
$$
\vskip .2truein
for $y_n,y_{n-1},\ldots,y_1\in\NN$ satisfying
$$
y_n\ge s_n(\epsilon), y_{n-1}\ge s_{n-1}(\epsilon,y_n),\ldots,y_1\ge s_1(\epsilon,y_2,\ldots ,y_n).
$$
\end{theorem}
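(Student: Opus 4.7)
The plan is to induct on the rank $n$ of $\nu$. The base case $n=1$ reduces to the corollary containing formula (\ref{eq30}): take $\nu_2$ to be the trivial valuation, so $p_2=0$, $\tilde\phi_2=0$, $(R/p_2)_{p_1}=R_{p_1}$, and the estimate of that corollary is exactly the claimed bound with $s_1(\epsilon)=s(\epsilon)$.

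For the inductive step, write $\nu=\nu_1$ and let $\nu_2$ be the rank $n-1$ valuation with which $\nu$ is composed; its chain of centers is $(0)\subset p_n\subseteq\cdots\subseteq p_2$. The idea is to peel off the innermost sum (over $\phi_1$) using Theorem \ref{TheoremA2} and then apply the inductive hypothesis to $\nu_2$, taking the prime $p_1\supseteq p_2$ to play the role of the outer prime. For each fixed $\phi_2$ appearing in the next-innermost sum, apply Theorem \ref{TheoremA2}(a) when $1\in I$ (so $p_1\ne p_2$):
\[
\sum_{\phi_1\in[\tilde\phi_2,\,\tilde\phi_2+t_1y_1[} e_{m_0}\!\bigl((\P_{\phi_1}/\P_{\phi_1}^+)_{p_0}\bigr) \le (1+\epsilon_1)\,\frac{e_{m_0}((R/p_1)_{p_0})}{(\dim(R/p_2)_{p_1})!}\, e_{m_1}\!\bigl((\P_{\phi_2}/\P_{\phi_2}^+)_{p_1}\bigr)\, y_1^{\dim(R/p_2)_{p_1}},
\]
valid provided $y_1>s(\epsilon_1,\phi_2)$. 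When $1\notin I$ use part (b) instead: the range $[\tilde\phi_2,\tilde\phi_2+t_1y_1[$ coincides with $[\tilde\phi_2,\widetilde{\phi_2^+}[$ for $y_1$ large (per the convention preceding the statement), and the conventions $\dim(R/p_2)_{p_1}=0$, $y_1^0=1$, and $e_{m_0}((R/p_1)_{p_0})=e_{m_0}((R/p_1)_{p_0})$ make that equality fit the same format.

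Pulling the $\phi_2$-independent factors out and summing over the outer $\phi_n,\ldots,\phi_2$ reduces the problem to bounding
\[
\sum_{\phi_n\in[0,\,t_ny_n[}\cdots\sum_{\phi_2\in[\tilde\phi_3,\,\tilde\phi_3+t_2y_2[} e_{m_1}\!\bigl((\P_{\phi_2}/\P_{\phi_2}^+)_{p_1}\bigr),
\]
which is exactly the left-hand side of the theorem applied to the rank $n-1$ valuation $\nu_2$ with outer prime $p_1$. The induction hypothesis provides functions $s_n'(\epsilon'),\ldots,s_2'(\epsilon',y_3,\ldots,y_n)$ and an upper bound of the form
\[
(1+\epsilon')\,\frac{\prod_{i=1}^n e_{m_i}((R/p_{i+1})_{p_i})}{\prod_{i=2}^n(\dim(R/p_{i+1})_{p_i})!}\prod_{i=2}^n y_i^{\dim(R/p_{i+1})_{p_i}},
\]
valid for $y_i\ge s_i'(\epsilon',y_{i+1},\ldots,y_n)$, $i\ge 2$. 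Choose $\epsilon_1,\epsilon'>0$ with $(1+\epsilon_1)(1+\epsilon')\le 1+\epsilon$, set $s_i=s_i'$ for $i\ge 2$, and define
\[
s_1(\epsilon,y_2,\ldots,y_n) = \max\bigl\{\,s(\epsilon_1,\phi_2) : \phi_2 \text{ occurs in the outer sums for the given } y_2,\ldots,y_n\,\bigr\}.
\]
Multiplying the two estimates then yields the desired inequality with the factor $(1+\epsilon)$ and the product of multiplicities and $y_i$-powers in the claimed form, after absorbing $e_{m_0}((R/p_1)_{p_0})$ into the product $\prod_{i=0}^n e_{m_i}((R/p_{i+1})_{p_i})$.

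The main obstacle is the dependence of the function $s$ on $\phi_2$ in Theorem \ref{TheoremA2}(a): the innermost step asks $y_1$ to exceed a threshold that varies with $\phi_2$, so one must replace it by a single threshold valid for all occurring $\phi_2$. This is legitimate because, once $y_2,\ldots,y_n$ are fixed, only finitely many $\phi_2 \in S^R(\nu_2)$ lie in the nested outer intervals (each interval in a rank-one quotient group $\Phi_i/\Phi_{i-1}$ meets $S^R$ in a finite set), so the maximum exists. A secondary bookkeeping point is to check that at each level $i\notin I$, the conventions $\dim(R/p_{i+1})_{p_i}=0$, $y_i^0=1$, and $e_{m_i}((R/p_{i+1})_{p_i})=1$ make Theorem \ref{TheoremA2}(b) slot into the same recursive template as (a), so a single uniform induction handles both cases.
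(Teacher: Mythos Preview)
Your proposal is correct and follows essentially the same approach as the paper: induction on the rank, with the base case handled by the corollary containing (\ref{eq30}), the inductive step obtained by applying Theorem~\ref{TheoremA2} to peel off the innermost sum and then invoking the rank $n-1$ case for $\nu_2$ with $p_1$ as the new outer prime, and $s_1$ defined as the maximum of $s(\epsilon_1,\phi_2)$ over the finitely many $\phi_2$ appearing in the outer sums. The only cosmetic difference is that the paper takes $\epsilon_1=\epsilon'$ with $(1+\epsilon_1)^2=1+\epsilon$, whereas you allow two separate parameters with $(1+\epsilon_1)(1+\epsilon')\le 1+\epsilon$; this is immaterial.
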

\begin{proof} The proof of this formula is by induction on the rank $n$ of the valuation $\nu$. We first prove the formula in the case when $n=1$. We apply (\ref{eq30}) to the ring $R_{p_0}$ and observe that for 
$\phi_1\in\Gamma_1$,
$\P_{\phi_1}(R_{p_0})\cong (\P_{\phi_1})_{p_0}$,
to obtain
$$
\sum_{\phi_1\in [0,t_1y_1[}e_{m_0}((\P_{\phi_1}/\P_{\phi_1}^+)_{p_0})
\le
(1+\epsilon)\frac{e_{m_0}((R/p_1)_{p_0})}{(\hbox{\rm dim }R_{p_1})!}e_{m_1}(R_{p_1})y_1^{\hbox{\rm dim }R_{p_1}}
$$
for $y_1\ge s_1(\epsilon)$, which is the formula for $n=1$.

We now assume that the formula is true for valuations of rank $<n$. We will derive the formula for a rank $n$ valuation $\nu$.   We apply the formula to the rank $n-1$ valuation $\nu_2$ which $\nu$ is composite with, and the chain of prime ideals
$$
(0)=q_n\subset q_{n-1}\subset \cdots\subset q_1\subset q_0
$$
where $q_{n-1}=p_{n},\ldots,q_1=p_2$ are the centers on $R$ of the successive valuations
$\nu_n,\ldots,\nu_2$ with which $\nu_2$ is composed, and $q_0=p_1$ is the new "mute" prime ideal. We obtain the inequality
\begin{equation}\label{eq6}
\sum_{\phi_n\in[0,t_ny_n[}\cdots\sum_{\phi_2\in [\tilde\phi_3,\tilde \phi_3+t_2y_2[}
e_{m_1}((\P_{\phi_2}/\P_{\phi_2}^+)_{p_1})\le
(1+\epsilon_1)\frac{\prod_{i=1}^ne_{m_i}((R/p_{i+1})_{p_i})}
{\prod_{i=2}^n(\hbox{\rm dim }(R/p_{i+1})_{p_i})!}
\prod_{i=2}^ny_i^{\mbox{dim }(R/p_{i+1})_{p_i}}
\end{equation}
for 
$$
y_n\ge s_n(\epsilon_1), y_{n-1}\ge s_{n-1}(\epsilon_1,y_n),\ldots,y_2\ge s_2(\epsilon_1,y_3,\ldots, y_{n-1}).
$$
We  apply Theorem \ref{TheoremA2} to  the valuations $\nu=\nu_1$ and $\nu_2$, 
and $p_2\subseteq p_1\subseteq p_0$, to obtain for $\phi_2\in[\tilde\phi_3+t_2y_2[$,\par\noindent 
In the case where $p_2\neq p_1$
\begin{equation}\label{eq7}
\sum_{\phi_1\in[\tilde\phi_2,\tilde\phi_2+y_1t_1[}e_{m_0}((\P_{\phi_1}/\P_{\phi_1}^+)_{p_0})
\le
(1+\epsilon_1)\frac{e_{m_0}((R/p_1)_{p_0})}
{(\mbox{dim }(R/p_2)_{p_1})!}e_{m_1}((\P_{\phi_2}/\P_{\phi_2}^+)_{p_1})
y_1^{\mbox{dim }(R/p_2)_{p_1}}
\end{equation}
for $y_1>s(\epsilon_1,\phi_2)$. Since $\#(S^R(\nu)\cap [\tilde\phi_3,\tilde\phi_3+t_2y_2[\,)<\infty$, we may define
$$
s_1(\epsilon_1,y_2,y_3,\ldots, y_n)=\mbox{max}\{s(\epsilon_1,\phi_2)\mid\phi_2\in[\tilde\phi_3,\tilde\phi_3+t_2y_2[, \phi_3\in [\tilde\phi_4,\tilde\phi_4+t_3y_3[,\ldots, \phi_n\in[0,t_ny_n[
\}.
$$

In the case where $p_2=p_1$ we have by theorem \ref{TheoremA2}, b)  the equality
$$
\sum_{\phi_1\in[\tilde\phi_2,\widetilde{\phi_2^+[}}e_{m_0}((\P_{\phi_1}/\P_{\phi_1}^+)_{p_0})= e_{m_0}((\P_{\phi_2}/\P_{\phi_2}^+)_{p_0})=e_{m_0}((R/p_1)_{p_0})e_{m_1}((\P_{\phi_2}/\P_{\phi_2}^+)_{p_1}),
$$
and define $s_1(\epsilon_1,y_2)=1$.

 \par\noindent
Finally, we  set
$$
\epsilon_1=2^{\frac{1}{2}\mbox{log}_2(1+\epsilon)}-1
,$$
so that $(1+\epsilon_1)^2=1+\epsilon$, and sum over $(\ref{eq7})$ and $(\ref{eq6})$ after multiplication by the appropriate factor to obtain the desired formula for $\nu$.
\end{proof}

As an immediate corollary, we obtain
\begin{corollary}\label{Cor5}The sum
$$\sum_{\phi_n\in [0,t_ny_n[}\,\,\,\,\,\sum_{\phi_{n-1}\in [
\tilde\phi_n,\tilde\phi_n+t_{n-1}y_{n-1}[}\cdots \sum_{\phi_1\in [
\tilde\phi_{2},\tilde\phi_{2}+t_1y_1[}e_{m_0}((\P_{\phi_1}/\P^+_{\phi_1})_{p_0})$$
is bounded for $y_1\gg y_2\gg \cdots\gg y_n\gg 0$  by a function which behaves asymptotically as
$$\frac{\prod_{i=0}^n e_{m_i}((R/p_{i+1})_{p_i})}{\prod_{i=1}^n(\hbox{\rm dim}(R/p_{i+1})_{p_i})!} \prod_{i=1}^ny_i^{\hbox{\rm dim }(R/p_{i+1})_{p_i}}.$$
 \par\noindent
\end{corollary}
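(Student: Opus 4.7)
The plan is to observe that Corollary \ref{Cor5} is an essentially cosmetic reformulation of Theorem \ref{Theorem1}, so the work has already been done. Concretely, I would argue: fix an arbitrary $\epsilon>0$. Theorem \ref{Theorem1} supplies thresholds $s_n(\epsilon)$ and $s_i(\epsilon,y_{i+1},\ldots,y_n)$ for $1\le i\le n-1$ such that, whenever $y_n\ge s_n(\epsilon)$, $y_{n-1}\ge s_{n-1}(\epsilon,y_n)$, down to $y_1\ge s_1(\epsilon,y_2,\ldots,y_n)$, the nested sum is bounded by
\[
(1+\epsilon)\,\frac{\prod_{i=0}^n e_{m_i}((R/p_{i+1})_{p_i})}{\prod_{i=1}^n(\dim(R/p_{i+1})_{p_i})!}\prod_{i=1}^n y_i^{\dim(R/p_{i+1})_{p_i}}.
\]
This is exactly the claim of Corollary \ref{Cor5}, once one interprets the shorthand $y_1\gg y_2\gg \cdots\gg y_n\gg 0$ as meaning precisely that the $y_i$ satisfy these nested inequalities.

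The only mildly nontrivial point to make explicit is the reading of ``behaves asymptotically as''. I would phrase it as follows: the ratio of the left-hand side to the explicit polynomial in $y_1,\ldots,y_n$ given in the statement is at most $1+\epsilon$ for all $(y_1,\ldots,y_n)$ in the domain $y_i\ge s_i(\epsilon,y_{i+1},\ldots,y_n)$. Since $\epsilon>0$ was arbitrary, the $\limsup$ of this ratio (taken along any sequence with $y_n\to\infty$, $y_{n-1}/s_{n-1}(\epsilon,y_n)\to\infty$, etc.) is at most $1$, which is the asymptotic statement in the form of an upper bound.

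There is no genuine obstacle here; the only thing to be careful about is to invoke Theorem \ref{Theorem1} in the correct nested order (innermost $y_n$ constraint first, outermost $y_1$ constraint last) and to note that the dependence of the thresholds on the previous $y_j$'s is exactly what is encoded in the ``$y_1\gg y_2\gg \cdots\gg y_n$'' notation. The proof therefore reduces to a single sentence: apply Theorem \ref{Theorem1} and let $\epsilon\to 0$.
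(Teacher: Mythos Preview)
Your proposal is correct and matches the paper's approach: the paper states Corollary \ref{Cor5} as an immediate consequence of Theorem \ref{Theorem1} with no further argument, and your reading of ``$y_1\gg\cdots\gg y_n\gg 0$'' as the nested threshold conditions together with letting $\epsilon\to 0$ is exactly the intended content.
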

Using the notations of Definition \ref{Def1}, and the conventions preceding Theorem \ref{Theorem1}, define the pseudo-boxes 
$$B_\Gamma (y_1,\ldots, y_n)=\{\phi\in \Gamma/\phi\in [
\tilde\phi_{2},\tilde\phi_{2}+t_1y_1[, \phi_2\in [\tilde\phi_3, \tilde\phi_3+t_2y_2[, \ldots, \phi_n\in [0,t_ny_n[\}.$$ Then we have:
\begin{corollary}\label{Cor6} For $y_1\gg y_2\gg \cdots\gg y_n\gg 0$ the number 
$\#(S^R(\nu)\bigcap B_\Gamma(y_1,\ldots, y_n))$ is bounded by the same function as in Corollary \ref{Cor5}.
\end{corollary}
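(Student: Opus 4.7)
The plan is to reduce Corollary \ref{Cor6} directly to Corollary \ref{Cor5} by making the specific choice $p_0 = p_1$, i.e.\ the center of $\nu$ itself on $R$. With this choice, the nested sum appearing in Corollary \ref{Cor5} can be rewritten as
$$\sum_{\phi \in B_\Gamma(y_1,\ldots,y_n)} e_{m_1}\bigl((\P_\phi/\P^+_\phi)_{p_1}\bigr),$$
since specifying a compatible tuple $(\phi_1,\ldots,\phi_n)$ in the nested sums is the same as specifying a single $\phi = \phi_1 \in \Gamma$ and taking for $\phi_i$ its image in $\Gamma/\Phi_{i-1}$. It therefore suffices to show that each $\phi \in S^R(\nu) \cap B_\Gamma(y_1,\ldots,y_n)$ contributes at least $1$ to this sum; Corollary \ref{Cor5} will then furnish the desired asymptotic bound (noting that the factor $e_{m_0}((R/p_1)_{p_0})$ degenerates to $1$ in this specialization).

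The first step is to observe that $p_1$ annihilates $\P_\phi/\P^+_\phi$: for $g \in p_1$ we have $\nu(g) > 0$, so for any $x \in \P_\phi$, $\nu(gx) = \nu(g) + \nu(x) > \phi$ and hence $gx \in \P^+_\phi$. Consequently $(\P_\phi/\P^+_\phi)_{p_1}$ is a module over the residue field $R_{p_1}/p_1 R_{p_1}$, and its $e_{m_1}$-multiplicity coincides with its dimension as a vector space over that field. The task thus reduces to proving that this localization is nonzero whenever $\phi \in S^R(\nu)$.

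For the second step, given such a $\phi$, choose $f \in R$ with $\nu(f) = \phi$; then the class $[f] \in \P_\phi/\P^+_\phi$ is nonzero. If $g \in R$ satisfies $gf \in \P^+_\phi$, then $\nu(g) + \phi = \nu(gf) > \phi$ forces $\nu(g) > 0$, whence $g \in p_1$. So $\operatorname{Ann}_R([f]) \subseteq p_1$, and $[f]$ survives localization at $p_1$. Combined with the previous step and Corollary \ref{Cor5}, this delivers the claimed bound on $\#(S^R(\nu) \cap B_\Gamma(y_1,\ldots,y_n))$.

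There is no substantive obstacle in the argument; the only delicate point is the choice $p_0 = p_1$, which is essential because it guarantees that $(\P_\phi/\P^+_\phi)_{p_0}$ has $m_0$-primary support, so that its multiplicity is a positive integer whenever the module is nonzero. For a general prime $p_0 \supsetneq p_1$, the localization is still nonzero for $\phi \in S^R(\nu)$ but its $e_{m_0}$-multiplicity could vanish, so the corresponding instance of Corollary \ref{Cor5} would not control the count.
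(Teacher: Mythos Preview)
Your proof is correct and matches the paper's implicit approach: the paper gives no separate proof of Corollary~\ref{Cor6}, treating it as immediate from Corollary~\ref{Cor5}, and elsewhere (e.g.\ in deriving~(\ref{eqL12}) and in the corollary preceding Theorem~\ref{Theorem1}) the same specialization $p_0=p_1$ is used to convert multiplicity sums into cardinality bounds.

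One remark on your final paragraph: the claim that for $p_0\supsetneq p_1$ the multiplicity $e_{m_0}((\P_\phi/\P_\phi^+)_{p_0})$ ``could vanish'' is not accurate. Your own annihilator computation shows that $\P_\phi/\P_\phi^+$ is torsion-free over $R/p_1$, so its unique associated prime is $p_1$; the associativity formula (\cite{B}, Section~7, no.~1, Proposition~3) then gives
\[
e_{m_0}\bigl((\P_\phi/\P_\phi^+)_{p_0}\bigr)=\mbox{length}_{R_{p_1}}\bigl((\P_\phi/\P_\phi^+)_{p_1}\bigr)\cdot e_{m_0}\bigl((R/p_1)_{p_0}\bigr)\ge 1
\]
for every $\phi\in S^R(\nu)$ and every $p_0\supseteq p_1$. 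So the choice $p_0=p_1$ is convenient (it collapses the multiplicity to a length over a field) but not essential; the counting bound holds for the general $p_0$ appearing in Corollary~\ref{Cor5}. This does not affect the correctness of your argument.
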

\begin{Remark} 1) The only centers $p_i$ which contribute to the right hand side of the inequalities are those for which the inclusion $p_{i+1}\subseteq p_i$ is strict. \par\noindent
2) The total degree of the monomial appearing on the right hand side is $\hbox{\rm dim }R-\hbox{\rm dim }R/p_1$, which is $\hbox{\rm dim }R$ in the case where $\nu$ is centered at the maximal ideal $m_R$.
\end{Remark}

We now give an application of  Theorem \ref{Theorem1}.
Suppose that $\nu$ is a rank 2 valuation dominating a local domain $R$. Let $\Gamma_2$ be the value group of the composed valuation $\nu_2$ of the quotient field of $R$, and let $p_2$ be the center of $\nu_2$ on $R$, $t_1=\nu(m_R)$. 
Theorem \ref{TheoremA2} gives us a family of growth conditions for $\phi_2\in\Gamma_2$ on $S^R(\nu)\cap [\tilde\phi_2,nt_1[$ for $n$ sufficiently large.  To be precise, Theorem \ref{TheoremA2} tells us that for each $\phi_2\in\Gamma_2$, there exist functions $d(\phi_2)$ and $s(\phi_2)\in\NN$
such that 
\begin{equation}\label{eq07}
\#(S^R(\nu)\cap[\tilde\phi_2,\tilde\phi_2+nt_1[\,)<d(\phi_2)n^{\mbox{dim }R/p_2}
\end{equation}
for $n>s(\phi_2)$.

\begin{Example}\label{Example3}
For every natural number $s\ge 3$, there exists a rank 2, well ordered subsemigroup $T$ of the positive part of  $(\ZZ\times\QQ)_{\mbox{lex}}$, which is of ordinal type $\omega^2$ and satisfies the restrictions (\ref{eq07}) for all $\phi_2\in\NN$, but is not the semigroup of a valuation dominating 
an $s$ dimensional local domain.
\end{Example}

\begin{proof} Let $r=s-2$. Define a subsemigroup of $\QQ_{\ge 0}$ by 
$$
\begin{array}{lll}
S&=&\{(2^m+j)+\frac{\alpha}{2^{(m+1)r}}\mid
m,j,\alpha\in\NN,\\
&&\,\,\,\,\, 0\le j< 2^m, 0\le\alpha<2^{(m+1)r}\}.
\end{array}
$$
Suppose that $n$ is a positive integer. Then there exists a unique expression $n=2^m+j$ with
$0\le j< 2^m$.   We have
$$
\#(S\cap [n,n+1[\,)=2^{(m+1)r},
$$
and since $2^m\le n<2^{m+1}$,
\begin{equation}\label{eqN2}
n^r<\#(S\cap [n,n+1[\,)\le 2^r n^r.
\end{equation}

For $y$ a positive integer, let $f(y)=\sum_{n=1}^{y-1}n^r$. The first difference function $f(y+1)-f(y)=y^r$ is a polynomial of degree $r$ in $y$. Thus
$f(y)$ is a polynomial of degree $r+1$ in $y$, with positive leading coefficient.
From
$$
\#(S\cap [0,y\,)=\sum_{n=1}^{y-1}\#(S\cap [n,n+1[\,)
$$
and (\ref{eqN2}), we deduce that
\begin{equation}\label{eqN1}
f(y)<\#(S\cap [0,y[\,)\le 2^rf(y).
\end{equation}

Suppose that $c\in\NN$. Then 
$\#((\frac{1}{c}S)\cap[0,y[\,)=\#(S\cap[0,cy[\,)$.
 Thus 
\begin{equation}\label{eq20}
f(cy)< \#((\frac{1}{c}S)\cap[0,y[\,) \le 2^rf(cy).
\end{equation}

For $i\in\NN$, let 
\begin{equation}\label{eq22}
c(i)=\left\{\begin{array}{ll}
1&\mbox{ if }i=0\\
i &\mbox{ if }i\ge 1.
\end{array}\right.
\end{equation}
 Let 
$$
T=\bigcup_{m\in\NN}\{m\}\times(\frac{1}{c(m)}S)\subset (\ZZ\times\QQ)_{\mbox{lex}}.
$$
$T$ is a well ordered subsemigroup of $(\ZZ\times\QQ)_{\mbox{lex}}$, of ordinal type $\omega^2$.

Suppose that $T$ is the semigroup $S^R(\nu)$ of a valuation $\nu$ dominating an $s=r+2$ dimensional  local domain $R$.
Then $\nu$ has rank 2. Let $\nu_2$ be the composed valuation
$\nu_2(f)=\pi_1(\nu(f))$ for $f\in R$, where $\pi_1:\ZZ\times\QQ\rightarrow\ZZ$ is the first projection. By assumption, the center of $\nu$ on $R$ is the maximal ideal $m_R$ of $R$. Let $p_2$ be the center of
$\nu_2$ on $R$. We see from an inspection of $T$ that
$t_1=\nu(m_R)=(0,1)$ and $t_2=\nu_2(p_2)=1$. 
Further, $\tilde \phi_2=(\phi_2,\frac{1}{c(\phi_2)})$ for all $\phi_2\in\ZZ_+$. 
Observe that for all $\phi_2\in\ZZ_+$, and $y_1\in\NN$,
$$
\#(T\cap[\tilde\phi_2,\tilde\phi_2+y_1t_1[\,)=
\#(T\cap \{\phi_2\}\times [0, y_1[\,)=\#(\frac{1}{c(\phi_2)}S\cap [0, y_1[\,).
$$
From (\ref{eq20}), we see that 
\begin{equation}\label{eq21}
 f(c(\phi_2)y_1)< \#(T\cap \{\phi_2\}\times [0, y_1[\,)
\le  2^rf(c(\phi_2)y_1).
\end{equation}

Thus $T$ satisfies the  growth conditions (\ref{eq07}) on a local domain $R$  with 
$\mbox{dim } R/p_2\ge r+1$. Since $T$ has rank 2, we must have that 
$\mbox{dim } R\ge \mbox{dim } R/p_2+1$.

Since we are assuming that $R$ has dimension $s=r+2$,  we have
that $\mbox{dim } R/p_2=r+1$ and $\mbox{dim } R_{p_2}=1$. Since $\nu_2$ is a discrete rank 1 valuation dominating $R_{p_2}$, this is consistent.

Theorem \ref{Theorem1} tells us that there exists a function $s(y_2)$ and $d\in \ZZ_+$ such that
$$
\#([0,y_2[\,\times [0,y_1[\,\,\cap\, T)\le d y_1^ay_2^b
$$
for $y_1\ge s(y_2)$,
where $a=\mbox{dim } R/p_2=r+1$ and $b=\mbox{dim } R_{p_2}=1$.
From (\ref{eq21}) and (\ref{eq22}), we see that
$$
f(y_1)+\sum_{i=1}^{y_2-1}f(iy_1)\le \#([0,y_2[\,\times [0,y_1[\,\,\cap\, T).
$$ 
There exists a positive constant $e_1$ such that  $f(y_1)\ge e_1y_1^{r+1}$ for all $y_1\in\NN$, and thus there exists a positive constant $e$ such that
$$
f(y_1)+\sum_{i=1}^{y_2-1}f(iy_1)\ge ey_1^{r+1}y_2^{r+1}
$$
for $y_1,y_2\in\NN$. Thus we have 
$$
dy_1^ay_2^b=dy_1^{r+1}y_2> ey_1^{r+1}y_2^{r+1}
$$
for all large $y_2$, which is impossible.

\end{proof}


\section{Wild behavior of the tilde function}

The tilde function $\tilde\phi$, defined in Definition \ref{Def1}, gives critical information about the behavior of valuations of rank larger than one.
This is illustrated by its role in the statement of Theorem \ref{Theorem1}, which 
 shows that there is some order in the behavior of 
semigroups of higher rank valuations. However, the sums in this theorem are
all defined starting from the functions $\tilde\phi$. This function can be
extremely chaotic, as we will illustrate in this section.

We will give examples of rank two valuations, showing that $\tilde\phi$ can decrease arbitrarily
fast as $\phi$ increases (Example \ref{ExW1}), $\tilde\phi$ can increase 
arbitrarily fast as $\phi$ increases (Example \ref{ExW2}), and that $\tilde\phi$
can jump back and forth from   negative numbers which decrease arbitrarily fast to  positive numbers which increase arbitrarily fast as $\phi$ increases (Example \ref{ExW3}). These properties are all independent of order preserving isomomorphism of the value group.

To construct our examples, we will make use of the following technical lemma,
and some variants of it.  This lemma is a generalization of the notion of
generating sequences of valuations on regular local rings of dimension 2 (\cite{M}, \cite{S}, \cite{FJ}). 

\begin{lemma}\label{LemmaW1} Suppose that $\sigma:\ZZ_+\rightarrow \NN$ is a function.
Let $K(x,y,z)$ be a rational function field 
in three variables over a field $K$.
Set 
$$
P_0=x, P_1=y
$$
and
$$
 P_{i+1}=z^{\sigma(i)}P_i^2-P_0^{2^{i+1}}P_{i-1}
$$
for $i\ge 1$.

Define, by induction on $i$,
$\eta_0=1$, and 
\begin{equation}\label{eqW13}
\eta_{i+1}=2\eta_i+\frac{1}{2^{i+1}}
\end{equation}
for $i\ge 0$. Define $\gamma_0=0$ and 
\begin{equation}\label{eqW14}
\gamma_i=-(\frac{\sigma(1)}{2^i}+\frac{\sigma(2)}{2^{i-1}}+\cdots+\frac{\sigma(i)}{2})
\end{equation}
for $i>0$.

\begin{enumerate}
\item[1.] Suppose that $f(x,y,z)\in K(z)[x,y]$. Then for $l\in\NN$ such that $\mbox{deg}_y f<2^l$, there  is a unique expansion 
\begin{equation}\label{eqW2}
f=\sum_{\alpha}a_{\alpha}(z)x^{\alpha_0}P_1^{\alpha_1}\cdots P_l^{\alpha_l}
\end{equation}
where $a_{\alpha}(z)\in K(z)$ and the sum is over $\alpha=(\alpha_0,\alpha_1,\ldots,\alpha_l)\in\NN\times\{0,1\}^l$.
\item[2.]
For $f\in K(z)[x,y]$, define from the expansion (\ref{eqW2}), 
\begin{equation}\label{eqW4}
\nu(f) =\mbox{min}_{\alpha}\{(0,\mbox{ord}_z(a_{\alpha}(z))+ \sum_{i=0}^{\ell}\alpha_i(\eta_i,\gamma_i)\}\in 
(\frac{1}{2^{\infty}}\ZZ\times \frac{1}{2^{\infty}}\ZZ)_{\mbox{lex}}.
\end{equation}\par\noindent
Then $\nu$ defines a rank 2 valuation on $K(x,y,z)$,  which is composite with a rank
1 valuation $\nu_2$ of $K(x,y,z)$. The value group of $\nu_2$ is 
$\frac{1}{2^{\infty}}\ZZ=\bigcup_{i=1}^{\infty}\frac{1}{2^i}\ZZ$.

The valuation $\nu$ dominates the local ring $R=K[x,y,z]_{(x,y,z)}$ 
and the center of $\nu_2$ on $R$ is the prime ideal $(x,y)$.
\end{enumerate}

\end{lemma}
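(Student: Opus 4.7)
The plan is to treat the two assertions in turn, with the second resting on the first. For assertion 1, I first establish by induction from the defining recursion that $\deg_y P_i = 2^{i-1}$ and its leading coefficient in $y$ is a nonzero monomial in $z$: the summand $P_0^{2^{i+1}} P_{i-1}$ in $P_{i+1} = z^{\sigma(i)} P_i^2 - P_0^{2^{i+1}} P_{i-1}$ has $y$-degree strictly less than $z^{\sigma(i)} P_i^2$, so the leading $y$-term of $P_{i+1}$ comes from the latter. Next, the binary-representation bijection $\{0,1\}^l \to \{0,\ldots,2^l-1\}$, $(\alpha_1,\ldots,\alpha_l) \mapsto \sum \alpha_i 2^{i-1}$, shows that the $2^l$ products $P_1^{\alpha_1}\cdots P_l^{\alpha_l}$ with $\alpha_i \in \{0,1\}$ have pairwise distinct $y$-degrees exhausting $\{0,\ldots,2^l-1\}$. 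Hence in the $K(z)$-module of polynomials of $y$-degree $< 2^l$, the change-of-basis matrix from $\{x^{\alpha_0} y^d\}$ to $\{x^{\alpha_0} P_1^{\alpha_1}\cdots P_l^{\alpha_l}\}$ is triangular with invertible diagonal entries in $K(z)$, proving both existence and uniqueness of the expansion.

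For assertion 2, I define $\nu$ on $K(z)[x,y]\setminus\{0\}$ by (\ref{eqW4}); this is well-posed by assertion 1. The inequality $\nu(f+g) \ge \min(\nu(f),\nu(g))$ follows at once from coefficient addition together with $\mathrm{ord}_z(a+b) \ge \min(\mathrm{ord}_z a, \mathrm{ord}_z b)$. A direct induction from (\ref{eqW13}), (\ref{eqW14}) yields the key identities $2\eta_i = 2^{i+1} + \eta_{i-1}$, $2\gamma_i + \sigma(i) = \gamma_{i-1}$, and $\eta_{i+1} > 2\eta_i$. Consequently the two terms on the right of $z^{\sigma(i)} P_i^2 = P_{i+1} + P_0^{2^{i+1}} P_{i-1}$ carry respective formal $\nu$-values $(\eta_{i+1}, \gamma_{i+1})$ and $(2\eta_i, 2\gamma_i + \sigma(i))$, the first strictly larger in lex order and the second matching the formal value of the left side. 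Solving for $P_i^2$ gives the reduction rule $P_i^2 = z^{-\sigma(i)} P_0^{2^{i+1}} P_{i-1} + z^{-\sigma(i)} P_{i+1}$, whose right side consists of standard monomials with coefficients in $K(z)$; iterating it finitely often rewrites any monomial $x^{\alpha_0} \prod_{i \ge 1} P_i^{\alpha_i}$ with arbitrary $\alpha_i \in \NN$ into standard form while preserving the minimum $\nu$-value of the terms produced.

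The principal technical obstacle is multiplicativity $\nu(fg) = \nu(f) + \nu(g)$. The argument rests on two observations. First, by induction from $\eta_{i+1} = 2\eta_i + 1/2^{i+1}$ we have $\eta_i = a_i/2^i$ with $a_i$ odd, so a $2$-adic denominator argument applied to $\sum_{i \ge 1}(\alpha_i - \alpha'_i)\eta_i = \alpha'_0 - \alpha_0$ with $\alpha_i, \alpha'_i \in \{0,1\}$ forces $\alpha_i = \alpha'_i$ inductively from $i = l$ downward. Therefore the first coordinates $u_\alpha = \alpha_0 + \sum_{i \ge 1} \alpha_i \eta_i$ of the formal $\nu$-values of standard monomials $M_\alpha = x^{\alpha_0} P_1^{\alpha_1} \cdots P_l^{\alpha_l}$ are pairwise distinct, so $\nu(f)$ is attained at a unique term $a_{\alpha^*(f)} M_{\alpha^*(f)}$ of $f$. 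Second, the standard expansion of each product $M_\alpha M_\beta$ (obtained by iterating the reduction rule) has a unique term of minimum $\nu$-value, equal to the sum $\nu(M_\alpha) + \nu(M_\beta)$, occurring at a specific standard monomial $M_{\gamma(\alpha,\beta)}$ with nonzero $K(z)$-coefficient. Combining these, in the standard expansion of $fg$ the coefficient of $M_{\gamma^*} = M_{\gamma(\alpha^*(f), \alpha^*(g))}$ receives nonzero contribution only from the pair $(\alpha^*(f), \alpha^*(g))$: any other contributing pair $(\alpha, \beta)$ would satisfy $u_{\alpha^*(f)} + u_{\alpha^*(g)} \le u_\alpha + u_\beta \le u_{\gamma^*} = u_{\alpha^*(f)} + u_{\alpha^*(g)}$, forcing equality throughout and hence $(\alpha, \beta) = (\alpha^*(f), \alpha^*(g))$ by uniqueness of minimizers. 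This yields $\nu(fg) = \nu(f) + \nu(g)$.

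Finally, $\nu$ extends uniquely to the fraction field $K(x,y,z)$ by $\nu(f/g) = \nu(f) - \nu(g)$. The values $\nu(x) = (1,0)$, $\nu(z) = (0,1)$, and $\nu(P_i) = (\eta_i, \gamma_i)$ show that the value group $\Gamma$ lies in $(\frac{1}{2^\infty}\ZZ \times \frac{1}{2^\infty}\ZZ)_{\mathrm{lex}}$ with rank-$1$ convex subgroup $\{0\} \times \frac{1}{2^\infty}\ZZ$, hence $\nu$ has rank $2$ and is composed with a rank-$1$ valuation $\nu_2$. Projection to the first coordinate shows the value group of $\nu_2$ contains each $\eta_i = a_i/2^i$ with $a_i$ odd, hence contains $1/2^i$ for all $i$, so equals $\frac{1}{2^\infty}\ZZ$. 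Since $\nu(x), \nu(y), \nu(z)$ are all positive, $\nu$ dominates $R$; since $\nu_2(x) = 1 > 0$ and $\nu_2(y) = \eta_1 > 0$ while $\nu_2(z) = 0$, the center of $\nu_2$ on $R$ is the prime ideal $(x, y)$.
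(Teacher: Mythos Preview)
Your proof is correct and follows essentially the same approach as the paper's: both establish the unique expansion via the $y$-degrees of the $P_i$, both identify the key fact that distinct standard monomials have distinct first-coordinate values (your $2$-adic argument makes explicit what the paper merely asserts), and both verify multiplicativity by rewriting $P_i^2$ via the relation $P_i^2=z^{-\sigma(i)}P_0^{2^{i+1}}P_{i-1}+z^{-\sigma(i)}P_{i+1}$ and tracking the minimum-value term through the reduction. The differences are stylistic: you handle existence/uniqueness of the expansion by a triangular change-of-basis argument where the paper uses iterated Euclidean division by $P_l, P_{l-1},\dots$; you leave termination of the reduction implicit (it follows since each substitution strictly decreases $\sum_{i\ge1}\alpha_i$), while the paper gives explicit descending-induction invariants $(n,m)$; and you spell out the concluding claims about rank, value group of $\nu_2$, domination, and center of $\nu_2$, which the paper's written proof omits.
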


\begin{proof}
We have that $\mbox{deg}_y P_i=2^{i-1}$ for $i\ge 1$. Suppose that $f\in K(z)[x,y]$
and $\ell$ is such that $\mbox{deg}_y f<2^{\ell}$.
By the euclidean algorithm, we have a unique expansion
$$
f=g_0(x,y)+g_1(x,y)P_l
$$
with $g_0,g_1\in K(z)[x,y]$ and $\mbox{deg}_y g_0<2^{l-1}$, $\mbox{deg}_y g_1<2^{l-1}$.
Iterating, we have a unique expansion of $f$ of the form of (\ref{eqW2}).

We have 
\begin{equation}\label{eqW5}
\nu(P_i)=(\eta_i,\gamma_i)
\end{equation}
and 

\begin{equation}\label{eqW6}
\nu(z^{\sigma(i)}P_i^2)=\nu(P_0^{2^{i+1}}P_{i-1})<\nu(P_{i+1})
\end{equation}
for all $i$.

Observe that for 
$$
\alpha=(\alpha_0,\ldots,\alpha_{\ell}), \beta=(\beta_0,\ldots,\beta_{\ell})\in\NN\times\{0,1\}^{\ell},
$$
\begin{equation}\label{eqW3}
\sum_{i=0}^{\ell}\alpha_i\eta_i=\sum_{i=0}^{\ell}\beta_i\eta_i
\end{equation}
implies $\alpha=\beta$.

The function $\nu$ defined by (\ref{eqW4}) thus has the property that there is 
a unique term in the expansion (\ref{eqW2}) for which the minimum (\ref{eqW4}) is achieved.  We will verify that $\nu$ defines a valuation on $K(x,y,z)$.
Suppose that $f,g\in K(z)[x,y]$. Let 

\begin{equation}\label{eqW7}
f=\sum_{\alpha}a_{\alpha}(z)x^{\alpha_0}P_1^{\alpha_1}\cdots P_{\ell}^{\alpha_{\ell}}
\end{equation}
and 
\begin{equation}\label{eqW8}
g=\sum_{\beta}a_{\beta}(z)x^{\beta_0}P_1^{\beta_1}\cdots P_{\ell}^{\beta_{\ell}}
\end{equation}
be the expressions of $f$ and $g$ of the form (\ref{eqW2}).

$$
f+g=\sum_{\alpha}(a_{\alpha}(z)+b_{\alpha}(z))x^{\alpha_0}P_1^{\alpha_1}\cdots
P_{\ell}^{\ell}
$$
is the expansion of $f+g$ of the form (\ref{eqW2}). Since $\mbox{ord}_z (a_{\alpha}(z)+b_{\alpha}(z))\ge \mbox{min}\{\mbox{ord}_z(a(z)),\mbox{ord}_z(b(z))\}$
for all $\alpha$, we have that 
$$
\nu(f+g)\ge\mbox{min}\{\nu(f),\nu(g)\}.
$$

We will now show that $\nu(fg)=\nu(f)+\nu(g)$.

Suppose that 
\begin{equation}\label{eqW9}
s=\sum_{\delta}d_{\delta}(z)x^{\delta_0}P_1^{\delta_1}\cdots P_{\ell}^{\delta_{\ell}}
\end{equation}
is an expansion, with $d_{\delta}\in K(z)$, and $\delta=(\delta_0,\ldots,\delta_{\ell})\in\NN^{\ell+1}$ for all $\delta$. 
We define
$$
\Lambda(s)=\mbox{min}_\delta\{(0,\mbox{ord}_zd_{\delta}(z))+\sum_{i=0}^{\ell}\delta_i(\eta_i,\gamma_i)\}.
$$
Observe that if $s$ is an expansion of the form (\ref{eqW2}); that is,
$\delta\in\NN\times\{0,1\}^{\ell}$ for all $\delta$, then
$\Lambda(s)=\nu(s)$.

Let 
$$
c_{\epsilon}(z)=\sum_{\alpha+\beta=\epsilon}a_{\alpha}(z)b_{\beta}(z).
$$
Let 
\begin{equation}\label{eqW10}
s_0=\sum_{\epsilon}c_{\epsilon}(z)x^{\epsilon_0}P_1^{\epsilon_1}\cdots P_{\ell}^{\epsilon_{\ell}}.
\end{equation}
$s_0$ is an expansion of the form (\ref{eqW9}), and $fg=s_0$. To simplify the indexing later on in the proof, we observe that we can initially take $\ell$ as large as we like.

Let $\alpha'$ (in the expansion (\ref{eqW7})) be such that 
$$
\nu(f)=\nu(a_{\alpha'}(z)x^{\alpha_0'}P_1^{\alpha_1'}\cdots P_{\ell}^{\alpha_{\ell}'})
$$
and let $\beta'$ (in the expansion (\ref{eqW8})) be such that
$$
\nu(g)=\nu(b_{\beta'}(z)x^{\beta_0'}P_1^{\beta_1'}\cdots P_{\ell}^{\beta_{\ell}'}).
$$
Let $\epsilon'=\alpha'+\beta'$. Then 
$$
c_{\epsilon'}(z)x^{\epsilon_0'}P_1^{\epsilon_1'}\cdots P_{\ell}^{\epsilon_{\ell}'}
$$
is the only term in $s_0$ which achieves  the minimum  $\Lambda(s_0)$.
We have that $c_{\epsilon'}(z)=a_{\alpha'}(z)b_{\beta'}(z)$ and 
\begin{equation}\label{eqW11}
\Lambda(s_0)=\nu(f)+\nu(g).
\end{equation}
If $\epsilon\in\NN\times\{0,1\}^{\ell}$ whenever $c_{\epsilon}\ne 0$, then we can can compute $\nu(fg)=\Lambda(s_0)$ and we are done. Otherwise, there exists an $i\ge 1$
such that there exists an $\epsilon$ with $\epsilon_i\ge 2$ and $c_{\epsilon}(z)\ne 0$. We then substitute the identity:
\begin{equation}\label{eqW12}
P_i^2=\frac{1}{z^{\sigma(i)}}P_{i+1}+\frac{1}{z^{\sigma(i)}}x^{2^{i+1}}P_{i-1}
\end{equation}
into $s_0$ to obtain an expansion of the form (\ref{eqW9}), where all terms
$$
c_{\epsilon}(z)x^{\epsilon_0}P_1^{\epsilon_1}\cdots P_{\ell}^{\gamma_{\ell}}
$$
with $\epsilon_i\ge 2$
are modified to the sum of two terms
$$
\frac{c_{\epsilon}(z)}{z^{\sigma(i)}}x^{\epsilon_0+2^{i+1}}P_1^{\epsilon_1}\cdots
P_{i-1}^{\epsilon_{i-1}+1}P_i^{\epsilon_i-2}P_{i+1}^{\epsilon_{i+1}}\cdots\P_{\ell}^{\epsilon_{\ell}}
+
\frac{c_{\epsilon}(z)}{z^{\sigma(i)}}x^{\epsilon_0}P_1^{\epsilon_1}\cdots
P_{i-1}^{\epsilon_{i-1}}P_i^{\epsilon_i-2}P_{i+1}^{\epsilon_{i+1}+1}\cdots\P_{\ell}^{\epsilon_{\ell}}.
$$
Collecting terms with like monomials in $x, P_1,\ldots,P_{\ell}$, we obtain a new expansion
$$
s_1=\sum d_{\delta}(z)x^{\delta_0}P_1^{\delta_1}\cdots P_{\ell}^{\delta_{\ell}}
$$
of $fg$. From the identities (\ref{eqW6}), we see that the minimum $\Lambda(s_1)$
is only obtained by the term $d_{\delta'}(z)x^{\delta_0'}P_1^{\delta_1'}\cdots P_{\ell}^{\delta_{\ell}'}$, where
$$
d_{\delta'}(z)x^{\delta_0'}P_1^{\delta_1'}\cdots P_{\ell}^{\delta_{\ell}'}
=\left\{\begin{array}{ll}
c_{\epsilon'}(z)x^{\epsilon_0'}P_1^{\epsilon_1'}\cdots P_{\ell}^{\epsilon_{\ell}'}
&\mbox{ if $\epsilon'_i<2$ }\\
\frac{c_{\epsilon'}(z)}{z^{\sigma(i)}}x^{\epsilon'_0+2^{i+1}}P_1^{\epsilon'_1}\cdots
P_{i-1}^{\epsilon'_{i-1}+1}P_i^{\epsilon'_i-2}P_{i+1}^{\epsilon'_{i+1}}\cdots\P_{\ell}^{\epsilon'_{\ell}}
&\mbox{ if $\epsilon'_i\ge 2$. }
\end{array}\right.
$$
We have $\Lambda(s_1)=\Lambda(s_0)$. 

By descending induction on the invariants 
$$
n=\mbox{max}\{\delta_1+\cdots+\delta_{\ell}\mid \mbox{some $\delta_i\ge 2$ and $d_{\delta}(z)\ne 0$}\}
$$
and
$$
m=\#\{(\delta_1,\ldots,\delta_{\ell})\in \NN^{\ell}\mid
\delta_1+\cdots+\delta_{\ell}=n,\mbox {some $\delta_i\ge 2$ and $d_{\delta}(z)\ne 0$}\},
$$
making substitutions of the form (\ref{eqW12}), we eventually obtain an expression $s$ of $fg$ of the form (\ref{eqW9}), with 
$(\delta_0,\ldots,\delta_{\ell})\in\NN\times\{0,1\}^{\ell}$ for all $\ell$. 
We then compute $\nu(fg)=\Lambda(s)=\Lambda(s_0)=\nu(f)+\nu(g)$.
We have thus completed the verification that $\nu$ is a valuation.

\end{proof}

\begin{example}\label{ExW1}   Suppose that $f:\NN\rightarrow\ZZ$ is a decreasing function,
and $K$ is a field.
Then there exists a valuation $\nu$  of the three dimensional rational function field $K(x,y,z)$
 with value group 
$(\frac{1}{2^{\infty}}\ZZ\times\ZZ)_{\mbox{lex}}$, which dominates the regular local ring $R=K[x,y,z]_{(x,y,z)}$, such that for any valuation $\omega$ equivalent to $\nu$ with value group $(\frac{1}{2^{\infty}}\ZZ\times\ZZ)_{\mbox{lex}}$, for all sufficiently large
$n\in\NN$, there exists $\lambda\in  \frac{1}{2^{\infty}}\ZZ\cap[0,n[$ such that
$\pi_2(\tilde\lambda)< f(n)$, where $\pi_2:\frac{1}{2^{\infty}}\ZZ\times\ZZ\rightarrow\ZZ$ is the second projection.
\end{example}

\begin{proof}  We choose positive integers $\sigma(i)$ so that 
$$
\gamma_i=-(\frac{\sigma(1)}{2^i}+\frac{\sigma(2)}{2^{i-1}}+\cdots+
\frac{\sigma(i)}{2})< f(i2^{i+3})
$$
for all positive integers $i$, 
where $\eta_i$ are defined by (\ref{eqW13}), and so that $\gamma_i\in\ZZ$.  Let $\nu$ be the valuation dominating $R$ defined by Lemma \ref{LemmaW1}, with this
choice of $\sigma$. The value group of $\nu$ is $(\frac{1}{2^{\infty}}\ZZ\times\ZZ)_{\mbox{lex}}$.

Let $\omega$ be a valuation equivalent to $\nu$ with value group $(\frac{1}{2^{\infty}}\ZZ\times\ZZ)_{\mbox{lex}}$. We have 
$\omega(x)=(a,b)$ for some $a\in \frac{1}{2^{\infty}}\ZZ_+$ and $b\in\ZZ$, and $\omega(z)=(0,c)$ for
some $c\in\ZZ_+$ since convex subgroups have to be preserved under an automorphism of ordered groups. From the relations (\ref{eqW6}) we see that
$\omega(P_i)=\eta_i\omega(x)+\gamma_i\omega(z)$ for $i\ge 1$, which implies $b=0$, since $\pi_2(\omega(P_i))\in\ZZ$. We also have
$\omega(a(z))=\mbox{ord}_z(a)\omega(z)$ for $a(z)\in K(z)$. Let $\omega_2$ be the valuation on $K(x,y,z)$ defined by $\omega_2(f)=\pi_1(\omega(f))$, where 
$\pi_1:\frac{1}{2^{\infty}}\ZZ\times\ZZ\rightarrow\frac{1}{2^{\infty}}\ZZ$ is the first projection.

Let 
$$
e= \lceil a \rceil,\mbox{
and }n_0=\lceil a\rceil 2^{e+2}.
$$

Suppose that $n\ge n_0$. We will find $\lambda\in \frac{1}{2^{\infty}}\ZZ\cap [0,n[$ such that $\pi_2(\tilde\lambda)<f(n)$.

There exists $i\ge e$ such that
$$
\lceil a\rceil 2^{i+2}\le n< \lceil a \rceil 2^{i+3}.
$$
Let $\lambda=a\eta_i$. From $\eta_i=\frac{1}{3}(2^{i+2}-\frac{1}{2^i})$, we obtain 
$$
\lambda=a\eta_i<\lceil a\rceil 2^{i+2}\le n.
$$
Since $c\in\ZZ_+$, $i\ge e$ and $f$ is decreasing, we have 
$$
c\gamma_i\le\gamma_i< f(i2^{i+3})
\le f(\lceil a\rceil 2^{i+3})
< f(n).
$$
Thus 
$$
\pi_2(\tilde \lambda)\le \pi_2(\omega(P_i))=c\gamma_i<f(n).
$$
\end{proof}

\begin{example}\label{ExW2}  Suppose that $g:\NN\rightarrow\ZZ$ is an increasing function,
and $K$ is a field.
Then there exists a valuation $\nu$  of the three dimensional rational function field $K(x,y,z)$
 with value group 
$(\frac{1}{2^{\infty}}\ZZ\times\ZZ)_{\mbox{lex}}$, which dominates the regular local ring $R=K[x,y,z]_{(x,y,z)}$, such that for any valuation $\omega$ equivalent to $\nu$ with value group $(\frac{1}{2^{\infty}}\ZZ\times\ZZ)_{\mbox{lex}}$, for all sufficiently large
$n\in\NN$, there exists $\lambda\in  \frac{1}{2^{\infty}}\ZZ\cap [0,n[$ such that
$\pi_2(\tilde\lambda)> g(n)$, where $\pi_2:\frac{1}{2^{\infty}}\ZZ\times\ZZ\rightarrow\ZZ$ is the second projection.
\end{example}

\begin{proof}
The proof is a variation of the proof of Example \ref{ExW1}. We outline
it here.

We first must establish a modification of Lemma \ref{LemmaW1}. 

Suppose that $\tau:\ZZ_+\rightarrow \NN$ is a function.
Set 
$$
Q_0=x, Q_1=y
$$
and
$$
 Q_{i+1}=Q_i^2-z^{\tau(i)}Q_0^{2^{i+1}}Q_{i-1}
$$
for $i\ge 1$.

As in Lemma \ref{LemmaW1}, 
define by induction on $i$,
$\eta_0=1$, and 
\begin{equation}\label{eqW20}
\eta_{i+1}=2\eta_i+\frac{1}{2^{i+1}}
\end{equation}
for $i\ge 0$. Define $\delta_0=0$ and 
\begin{equation}\label{eqW21}
\delta_i=\frac{\tau(1)}{2^i}+\frac{\tau(2)}{2^{i-1}}+\cdots+\frac{\tau(i)}{2}
\end{equation}
for $i>0$. 

Suppose that $f(x,y,z)\in K[x,y,z]$. Then for $l\in\NN$ such that $\mbox{deg}_y f<2^l$, there  is a unique expansion 
\begin{equation}\label{eqW22}
f=\sum_{\alpha}a_{\alpha}(z)x^{\alpha_0}P_1^{\alpha_1}\cdots P_l^{\alpha_l}
\end{equation}
where $a_{\alpha}(z)\in K[z]$ and the sum is over $\alpha=(\alpha_0,\alpha_1,\ldots,\alpha_l)\in\NN\times\{0,1\}^l$.
This is established as (\ref{eqW2}) in the statement of Lemma \ref{LemmaW1}. We have
a stronger statement which is valid in the polynomial ring $K[x,y,z]$, since
the leading coefficients of the $Q_i$, as polynomials in $y$, have 1 as their leading coefficient.

For $f\in K[x,y,z]$, define from the expansion (\ref{eqW22}), 
\begin{equation}\label{eqW23}
\nu(f) =\mbox{min}_{\alpha}\{ (0,\mbox{ord}_z(a_{\alpha}(z))+\sum_{i=0}^{\ell}\alpha_i(\eta_i,\delta_i)\}\in 
(\frac{1}{2^{\infty}}\ZZ\times \frac{1}{2^{\infty}}\ZZ)_{\mbox{lex}}.
\end{equation}
Then $\nu$ defines a rank 2 valuation on $K(x,y,z)$,  which is composite with a rank
1 valuation $\nu_2$ of $K(x,y,z)$. The value group of $\nu_2$ is 
$\frac{1}{2^{\infty}}\ZZ=\bigcup_{i=1}^{\infty}\frac{1}{2^i}\ZZ$. 

$\nu$ dominates the local ring $R=K[x,y,z]_{(x,y,z)}$ 
and the center of $\nu_2$ on $R$ is the prime ideal $(x,y)$.

We have $\nu(Q_i)=(\eta_i,\delta_i)$ and 
\begin{equation}\label{eqW24}
\nu(Q_i^2)=\nu(z^{\tau(i)}Q_0^{2^{i+1}}Q_{i-1})<\nu(Q_{i+1})
\end{equation}
for all $i$.

We now construct the example. We choose positive integers $\tau(i)$ so that
$$
\delta_i=\frac{\tau(1)}{2^i}+\frac{\tau(2)}{2^{i-1}}+\cdots+\frac{\tau(i)}{2}
>g(i2^{i+3})
$$
for all positive integers $i$, and $\delta_i\in\ZZ$ for all $i$. Let $\nu$ be the valuation constructed above, which dominates $R$. The value group of $\nu$ is
$(\frac{1}{2^{\infty}}\ZZ\times\ZZ)_{\mbox{lex}}$.

Let $\omega$ be a valuation equivalent to $\nu$ with value group $(\frac{1}{2^{\infty}}\ZZ\times\ZZ)_{\mbox{lex}}$. We have 
$\omega(x)=(a,b)$ for some $a\in\frac{1}{2^{\infty}}\ZZ_+$ and $b\in\ZZ$, and $\omega(z)=(0,c)$ for
some $c\in\ZZ_+$.  From the relations (\ref{eqW24}) we see that
$\omega(Q_i)=\eta_i\omega(x)+\delta_i\omega(z)$ for $i\ge 1$, which implies $b=0$, since $\pi_2(\omega(Q_i))\in\ZZ$. We also have 
$\omega(a(z))=\mbox{ord}_z(a)\omega(z)$ for $a(z)\in K(z)$. Let $\omega_2$ be the valuation on $K(x,y,z)$ defined by $\omega_2(f)=\pi_1(\omega(f))$, where
$\pi_1:\frac{1}{2^{\infty}}\ZZ\times\ZZ\rightarrow\frac{1}{2^{\infty}}\ZZ$ is the first projection.

Let 
$$
e=\lceil a \rceil\mbox{
and }n_0=\lceil a\rceil 2^{e+2}.
$$

Suppose that $n\ge n_0$. We will find $\lambda\in \frac{1}{2^{\infty}}\ZZ\cap [0,n[$ such that $\pi_2(\tilde\lambda)>g(n)$.

There exists $i\ge e$ such that
$$
\lceil a\rceil 2^{i+2}\le n< \lceil a\rceil 2^{i+3}.
$$
Let $\lambda=a\eta_i$. From $\eta_i=\frac{1}{3}(2^{i+2}-\frac{1}{2^i})$, we obtain 
$$
\lambda=a\eta_i<\lceil a\rceil 2^{i+2}\le n.
$$
Since $c\in\ZZ_+$, $i\ge e$ and $g$ is increasing, we have 
$$
c\delta_i\ge\delta_i> g(i2^{i+3})
\ge g(\lceil a\rceil 2^{i+3})
> g(n).
$$
Thus 
$$
\pi_2(\tilde \lambda)= \pi_2(\omega(Q_i))=c\delta_i>g(n).
$$

\end{proof}

\begin{example}\label{ExW3}  Suppose that 
$f:\NN\rightarrow\ZZ$ is a decreasing function,
$g:\NN\rightarrow\ZZ$ is an increasing function,
and $K$ is a field.
Then there exists a rank 2 valuation $\nu$  of the five dimensional rational function field $K(x,y,u,v,z)$
 with value group 
$(H\times\ZZ)_{\mbox{lex}}$, where $H=(\frac{1}{2^{\infty}}\ZZ+\frac{1}{2^{\infty}}\ZZ\sqrt{2})\subset\RR$, which dominates the regular local ring $R=K[x,y,u,v,z]_{(x,y,u,v,z)}$, such that for any valuation $\omega$ equivalent to $\nu$ with value group $(H\times\ZZ)_{\mbox{lex}}$, for all sufficiently large
$n\in\NN$, there exists $\lambda_1\in  H\cap [0,n[$ such that
$\pi_2(\tilde\lambda_1)< f(n)$and there exists $\lambda_2\in  H\cap [0,n[$ such that
$\pi_2(\tilde\lambda_2)> g(n)$, where $\pi_2:\frac{1}{2^{\infty}}\ZZ\times\ZZ\rightarrow \frac{1}{2^{\infty}}\ZZ$ is the second projection.
\end{example}

\begin{proof} We need an extension of the method of Lemma \ref{LemmaW1} for
constructing valuations which we first outline. Suppose that
$\sigma:\ZZ_+\rightarrow \NN$ and $\tau:\ZZ_+\rightarrow \NN$ are functions.
Define
$P_0=x$, $P_1=y$ and $P_{i+1}=z^{\sigma(i)}P_i^2-P_0^{2^{i+1}}P_{i-1}$ for $i\ge 1$.  Define $Q_0=u$, $Q_1=v$ and $Q_{i+1}=Q_i^2-z^{\tau(i)}Q_0^{2^{i+1}}Q_{i-1}$
for $i\ge 1$.

Suppose that $f\in K[z,x,y,u,v]$ and $\mbox{deg }f<2^l$. Then there is a unique expansion
$$
f=\sum_{\beta}g_{\beta}(z,x,y)u^{\beta_0}Q_1^{\beta_1}\cdots Q_{\ell}^{\beta_{\ell}}
$$
where the sum is over $\beta=(\beta_0,\beta_1,\ldots,\beta_{\ell})\in\NN\times\{0,1\}^{\ell}$
and $g_{\beta}\in K[z,x,y]$ for all $\beta$ (since the leading coefficient of each $Q_i$ with respect to $y$ is 1).  Each $g_{\beta}(z,x,y)$ has a unique
expansion
$$
g_{\beta}=\sum_{\alpha}a_{\alpha,\beta}(z)x^{\alpha_0}P_1^{\alpha_1}\cdots P_{\ell}^{\alpha_{\ell}}
$$
where the sum is over $\alpha=(\alpha_0,\alpha_1,\ldots,\alpha_{\ell})\in\NN\times\{0,1\}^{\ell}$
and $a_{\alpha,\beta}(z)\in K(z)$ for all $\alpha,\beta$.

Thus $f$ has a unique expansion 
\begin{equation}\label{eqW25}
f=\sum_{\alpha,\beta}a_{\alpha,\beta}(z)x^{\alpha_0} P_1^{\alpha_1}\cdots P_{\ell}^{\alpha_{\ell}}u^{\beta_0}Q_1^{\beta_1}\cdots Q_{\ell}^{\beta_{\ell}} 
\end{equation}
with $\alpha,\beta\in\NN\times\{0,1\}^{\ell}$, and $a_{\alpha,\beta}(z)\in K(z)$.

Set 
$\eta_0=1$ and 
$$
\eta_{i+1}=2\eta_i+\frac{1}{2^{i+1}}
$$
for $i\ge 0$.  Set $\gamma_0=0$ and
$$
\gamma_i=-(\frac{\sigma(1)}{2^i}+\frac{\sigma(2)}{2^{i-1}}+\cdots+\frac{\sigma(i)}{2})
$$
for $i>0$. Set $\delta_0=0$ and
$$
\delta_i=\frac{\tau(1)}{2^i}+\frac{\tau(2)}{2^{i-1}}+\cdots+\frac{\tau(i)}{2}
$$
for $i>0$.

Let $H=(\frac{1}{2^{\infty}}\ZZ+\frac{1}{2^{\infty}}\ZZ\sqrt{2})\subset \RR$.

We define from the expansion (\ref{eqW25}), 
\begin{equation}\label{eqW26}
\nu(f)=\mbox{min}_{\alpha,\beta}\{(0,\mbox{ord}_z(a_{\alpha,\beta}(z))+\sum\alpha_i(\eta_i,\gamma_i)+\sum\beta_i(\eta_i\sqrt{2},\delta_i)\}\in (H\times\frac{1}{2^{\infty}}\ZZ)_{\mbox{lex}}.
\end{equation}

We have 
$$
\nu(z^{\sigma(i)}P_i^2)=\nu(P_0^{2^{i+1}}P_{i-1})<\nu(P_{i+1})
$$
for all $i$,  $\nu(P_i)=(\eta_i,\gamma_i)$ for all $i$,
$$
\nu(Q_i^2)=\nu(z^{\tau(i)}Q_0^{2^{i+1}}Q_{i-1})<\nu(Q_{i+1})
$$
for all $i$,  $\nu(Q_i)=(\eta_i\sqrt{2},\delta_i)$ for all $i$.
Further, there is a unique term in the expansion (\ref{eqW25}) which achieves the minimum (\ref{eqW26}). We have 
$$
\nu(f)\in \frac{1}{2^{\infty}}\ZZ\times\frac{1}{2^{\infty}}\ZZ
$$
if and only if $f$ has the form
$$
f=a_{\alpha,0}(z)x^{\alpha_0}P_1^{\alpha_1}\cdots P_{\ell}^{\alpha_{\ell}}
+\mbox{ higher value terms},
$$
and 
$$
\nu(f)\in \frac{1}{2^{\infty}}\ZZ\sqrt{2}\times\frac{1}{2^{\infty}}\ZZ
$$
if and only if $f$ has the form
$$
f=a_{0,\beta}(z)u^{\beta_0}Q_1^{\beta_1}\cdots Q_{\ell}^{\beta_{\ell}}
+\mbox{ higher value terms}.
$$

Observe that for all $\beta$, we have  $a_{0,\beta}(z)\in K[z]$ in the expansion
(\ref{eqW25}).

We now construct the valuation of the example. For $i\in\ZZ_{+}$, choose $\sigma(i)\in \NN$ such that
$$
\gamma_i< f(i2^{i+3})
$$
and choose $\tau(i)\in \ZZ_{+}$ so that
$$
g(i2^{i+3})<\delta_i
$$
for all $i\in\ZZ_{+}$, and $\gamma_i,\delta_i\in\ZZ$ for all $i$.
Let $\nu$ be the valuation constructed above, which dominates $R$. The value group of $\nu$ is $(H\times\ZZ)_{\mbox{lex}}$.

Let $\omega$ be a valuation equivalent to $\nu$ with value group $(H\times\ZZ)_{\mbox{lex}}$. There exist $\alpha,\beta,\gamma,\delta\in\frac{1}{2^{\infty}}\ZZ$,
$b_1,b_2\in\ZZ$,  $c\in\ZZ_{+}$, such that
$$
\omega(x)=(a_1,b_1), \omega(u)=(a_2,b_2), \omega(z)=(0,c)
$$
where 
$$
a_1=\alpha+\beta\sqrt{2}>0, a_2=\gamma+\delta\sqrt{2}>0
$$
and $\alpha\delta-\beta\gamma\ne 0$. We have $\omega(P_i)=\eta_i\omega(x)+\gamma_i\omega(z)$  and $\omega(Q_i)=\eta_i\omega(u)+\delta_i\omega(z)$ for $i\ge 1$,
which implies $b_1=b_2=0$, since $\pi_2(\omega(P_i)), \pi_2(\omega(Q_i))\in\ZZ$.

Set 
$$
e=\mbox{max}\{\lceil a_1\rceil,\lceil a_2 \rceil\}.
$$
 Let $n_0=e2^{e+2}$.

Suppose that $n\ge n_0$. We will show that there exists $\lambda_1\in H\cap [0,n[$ such that $\pi_2(\tilde\lambda_1)<f(n)$ and there exists $\lambda_2\in H\cap [0,n[$ such that $\pi_2(\tilde\lambda_2)>g(n)$.

There exists $i\ge e$ such that $e 2^{i+2}\le n<e 2^{i+3}$. Let
$\lambda_1=a_1\eta_i$. From $\eta_i=\frac{1}{3}(2^{i+2}-\frac{1}{2^i})$ we obtain
$$
\lambda_1=a_1\eta_i<\lceil a_1\rceil 2^{i+2}\le e 2^{i+2}\le n.
$$
Since $c\in\ZZ_+$, $i\ge e$ and $f$ is decreasing, we have
$$
c\gamma_i\le\gamma_i< f(i2^{i+3})
\le f(e2^{i+3})
< f(n).
$$
Thus 
$$
\pi_2(\tilde\lambda_1)\le\pi_2(\omega(P_i))=c\gamma_i<f(n).
$$
Let
$\lambda_2=a_2\eta_i$. We have
$$
\lambda_2=a_2\eta_i<\lceil a_2\rceil 2^{i+2}\le e 2^{i+2}\le n.
$$
Since $c\in\ZZ_+$, $i\ge e$ and $g$ is increasing, we have
$$
c\delta_i\ge\delta_i> g(i2^{i+3})
\ge g(e2^{i+3})
>g(n).
$$

Thus 
$$
\pi_2(\tilde\lambda_2)=\pi_2(\omega(Q_i))=c\delta_i>g(n).
$$

\end{proof}

\par\vskip.5truecm\noindent
Dale Cutkosky,  \hfill Bernard Teissier,\par\noindent
202 Mathematical Sciences Bldg,\hfill  Institut Math\'{e}matique de Jussieu\par\noindent

University of Missouri,\hfill UMR 7586 du CNRS,\par\noindent
Columbia, MO 65211 USA\hfill 175 Rue du Chevaleret, 75013 Paris,France \par\noindent
cutkoskys@missouri.edu\hfill teissier@math.jussieu.fr\par\noindent

\end{document}